\newtheorem{Theorem}{Theorem}
\newtheorem{Lemma}{Lemma}
\newtheorem{Convention}{Convention}
\title{Wave propagation in abstract dynamical system with boundary control}
\author{M.I.Belishev\thanks {St.Petersburg Department of Steklov Mathematical Institute, St.Petersburg, Russian Federation,
        \newline
        e-mail: belishev@pdmi.ras.ru,
        \newline
        ORCID: 0000-0002-4759-7428;
        }.}
\date{}
\begin{document}
\maketitle

\begin{abstract}
Let $L_0$ be a positive definite ope\-ra\-tor in a Hilbert space
$\mathscr H$ with the defect indexes $n_\pm\geqslant 1$ and let
$\{{\rm Ker\,}L^*_0;\Gamma_1,\Gamma_2\}$ be its canonical (by
M.I.Vishik) boundary triple. The paper deals with an evolutionary
dynamical system of the form
\begin{align*}
& u_{tt}+{L_0^*} u=0 &&\text{in}\,\,{\mathscr H},\,\,\,t>0;\\
& u\big|_{t=0}=u_t\big|_{t=0}=0 && {\rm in}\,\,{\mathscr H};\\
& \Gamma_1 u=f(t), && t\geqslant 0,
\end{align*}
where $f$ is a boundary control (a ${\rm Ker\,}L^*_0$-valued
function of time), $u=u^f(t)$ is a trajectory.  Some of the
general properties of such systems are considered. An abstract
analog of the finiteness principle of wave propagation speed is
revealed.
\end{abstract}

\noindent{\bf Key words:}\,\,\, symmetric semi-bounded operator,
Vishik boundary triple, dynamic system with boundary control,
finiteness of wave propagation speed.

\noindent{\bf MSC:}\,\,\,35Lxx, 35L05, 35Q93, 47B25.

\bigskip

\rightline{\bf Dedicated to the 85-th jubilee of
A.S.Blagoveshchenskii}

\section{About the paper}
\noindent$\bullet$\,\,\, A dynamical system with boundary control
(DSBC) that we deal with, is determined by a symmetric
semi-bounded operator with nonzero defect indexes. We are
interested in most general properties of such systems. Motivation
comes from a program of constructing a functional model of such
operators (the so-called {\it wave model}: see \cite{B JOT},
\cite{BSim_1}-\cite{BSim_4}). The given paper develops the results
\cite{B DSBC IP 2001,BD_DSBC} on the general properties of DSBC.
Perhaps, most curious of new facts is that the finiteness
principle of wave propagation speed (for short, FS principle),
which is well known and holds in numerous applications, does have
a relevant analog for abstract DSBC.
\smallskip

\noindent$\bullet$\,\,\, The paper is dedicated to the jubilee of
my teacher Aleksandr Sergeevich Blagoveshchenskii, one of the
pioneers and creators of the dynamical inverse problems theory. At
one time, he explained me the deepness and opportunities of the
D'Alembert formula. It would not be an exaggeration to say that
the given work is done in the manner and technique of Alexander
Sergeevich.

\section{Operator $L_0$}

\subsubsection*{Boundary triple}
\noindent$\bullet$\,\,\, As was noted above, DSBC is associated
with a semi-bounded operator. The class of these operators that we
deal with, is the following. We assume that $L_0$ is a closed
densely defined symmetric positive definite operator in a Hilbert
space ${\mathscr H}$ with nonzero defect indexes; so that
$$
\overline{{\rm Dom\,} L_0}={\mathscr H};\quad
L_0\subset{L_0^*};\quad L_0\geqslant\gamma\,\mathbb
I,\,\,\,\,\gamma>0;\quad 1\leqslant
n_+^{L_0}=n_-^{L_0}\leqslant\infty
$$
holds, where $\mathbb I$ is the identity operator. Note that by
virtue of $n_\pm^{L_0}\not=0$ such an operator is necessarily
unbounded.
\smallskip

\noindent$\bullet$\,\,\, We denote ${\mathscr K}:={\rm Ker\,}
{L_0^*}$ and use the (orthogonal) projection $P$ in ${\mathscr H}$
on ${\mathscr K}$. Note that ${\rm dim\,}{\mathscr K}
=n_\pm^{L_0}$ holds.

Let $L$ be the extension of $L_0$ by Friedrichs: $L_0\subset
L=L^*\subset L^*_0$, $L\geqslant\gamma\,\mathbb I$, ${\rm Ran\,}
L={\mathscr H}$. Its inverse $L^{-1}$ is a self-adjoint bounded
operator in ${\mathscr H}$.

The well-known decomposition by Vishik \cite{Vishik} is
\begin{equation}\label{Eq Vishik Decomp Dom}
{\rm Dom\,}L_0^*={\rm Dom\,}L_0 \overset{.}+L^{-1}{\mathscr
K}\overset{.}+{\mathscr K}={\rm Dom\,} L\overset{.}+{\mathscr K};
\end{equation}
the latter equality is established in the framework of M.Krein's
theory \cite{MMM}. Thus, each $y \in {\rm Dom\,}L_0^*$ is uniquely
represented in the form
\begin{equation}\label{Eq Vishik Decomp}
y=y_0+L^{-1}g+h =y'+h
\end{equation}
with some $g, h\in{\mathscr K}$ and $y':=y_0+L^{-1}g\in {\rm
Dom\,} L$. The components are determined by $y$ as follows:
\begin{equation}\label{Eq Vishik Decomp components}
y'=L^{-1}{L_0^*} y,\quad y_0=L^{-1}{L_0^*}(y-y'),\quad h=y-y'-y_0.
\end{equation}

The operators
$$
\Gamma_1:=L^{-1}{L_0^*}-\mathbb I,\quad\Gamma_2:=P{L_0^*};\qquad
{\rm Dom\,}\Gamma_{1,2}={\rm Dom\,}{L_0^*}
$$
are called {\it boundary operators}. By definitions and (\ref{Eq
Vishik Decomp}),
\begin{equation}\label{Eq Gamma1,2 y}
\Gamma_1y=-h, \qquad \Gamma_2 y =g.
\end{equation}
Also, these definitions imply
\begin{equation}\label{Eq Ran Gamma 1,2}
{\rm Ran\,}\Gamma_1={\rm Ran\,}\Gamma_2={\mathscr K}.
\end{equation}
Note that, in general, boundary operators may be unclosable;
moreover, such a situation is typical in applications. However, if
one endows ${\rm Dom\,}{L_0^*}$ with the graph-norm $\|y\|^2_{\rm
graph}=\|y\|^2+\|{L_0^*} y\|^2$ then $\Gamma_{1,2}$ become
continuous \cite{MMM}.
\smallskip

\noindent$\bullet$\,\,\, The relation
\begin{equation}\label{Eq Green}
({L_0^*} u,v)-(u,{L_0^*}
v)=(\Gamma_1u,\Gamma_2v)-(\Gamma_2u,\Gamma_1v),\qquad u,v\in{\rm
Dom\,}{L_0^*}
\end{equation}
is valid (see, e.g., \cite{BD_DSBC}). By operator theory
terminology \cite{MMM}, relations (\ref{Eq Ran Gamma 1,2}) and
(\ref{Eq Green}) mean that the collection $\{{\mathscr
K};\Gamma_1,\Gamma_2\}$ constitutes the {\it boundary triple} of
the operator $L_0$. The general boundary triple theory provides
\begin{equation}\label{Eq L0,L}
L_0={L_0^*}\upharpoonright[{\rm Ker\,}\Gamma_1\cap{\rm
Ker\,}\Gamma_2],\qquad L={L_0^*}\upharpoonright{\rm Ker\,}\Gamma_1
\end{equation}
(see \cite{MMM}, Chapter 7).
\smallskip

\noindent$\bullet$\,\,\, A possible way to realize decomposition
(\ref{Eq Vishik Decomp}) is to solve two "boundary value
problems".
\begin{Lemma}\label{L abstract BVP}
Let $y=y_0+L^{-1}g+h$ be the Vishik decomposition of $y\in{\rm
Dom\,}{L_0^*}$. Then the elements $h$ and $g$ are uniquely
determined by the relations
\begin{equation}\label{Eq abstract BVP 1}
{L_0^*} h=0; \qquad \Gamma_1 h =\Gamma_1 y.
\end{equation}
and
\begin{equation}\label{Eq abstract BVP 2}
{L_0^*}^2 w=0; \quad \Gamma_1 w=0;\quad \Gamma_2 w=\Gamma_2(y-h),
\end{equation}
respectively, where $w:=L^{-1}g$.
\end{Lemma}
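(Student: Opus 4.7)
The strategy is direct: for each of the two boundary value problems I verify existence by exhibiting that the Vishik components $h$ and $w:=L^{-1}g$ solve the stated equations, and then deduce uniqueness from the boundary triple identities (\ref{Eq Gamma1,2 y})--(\ref{Eq L0,L}). The two statements are essentially dual: the first BVP characterizes the $\mathscr{K}$-component by a "Dirichlet-type" condition via $\Gamma_1$, while the second characterizes $w\in{\rm Dom\,}L$ by imposing $\Gamma_1 w=0$ together with a "Neumann-type" datum via $\Gamma_2$.

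For (\ref{Eq abstract BVP 1}), existence is immediate: since $h\in\mathscr{K}={\rm Ker\,}L_0^*$, one has $L_0^* h=0$. To see $\Gamma_1 h=\Gamma_1 y$, note that the Vishik decomposition of $h$ itself is $h=0+L^{-1}\cdot 0+h$, so (\ref{Eq Gamma1,2 y}) gives $\Gamma_1 h=-h$; and the same identity applied to $y$ gives $\Gamma_1 y=-h$, so both sides agree. Conversely, any $h'$ satisfying $L_0^* h'=0$ lies in $\mathscr{K}$, hence by the same computation $\Gamma_1 h'=-h'$. The condition $\Gamma_1 h'=\Gamma_1 y=-h$ then forces $h'=h$, which gives uniqueness.

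For (\ref{Eq abstract BVP 2}), set $w:=L^{-1}g$. Since $w\in{\rm Dom\,}L$ and $L\subset L_0^*$, one has $L_0^* w=Lw=g\in\mathscr{K}$, whence $L_0^{*2}w=L_0^* g=0$. The identity $L={L_0^*}\upharpoonright{\rm Ker\,}\Gamma_1$ in (\ref{Eq L0,L}) gives $\Gamma_1 w=0$. Finally, $\Gamma_2 w=PL_0^* w=Pg=g$. It remains to identify this with $\Gamma_2(y-h)$: from (\ref{Eq Vishik Decomp}) one has $y-h=y_0+L^{-1}g$, so $\Gamma_2(y-h)=PL_0^*(y_0+L^{-1}g)=P(L_0 y_0)+Pg$. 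Since $\mathscr{K}={\rm Ker\,}L_0^*=({\rm Ran\,}L_0)^\perp$, the first summand vanishes, and $\Gamma_2(y-h)=g=\Gamma_2 w$. For uniqueness, suppose $w'$ solves (\ref{Eq abstract BVP 2}). From $\Gamma_1 w'=0$ and (\ref{Eq L0,L}) one gets $w'\in{\rm Dom\,}L$ and $L_0^* w'=Lw'$; then $L_0^{*2}w'=0$ says $Lw'\in\mathscr{K}$, so $\Gamma_2 w'=P(Lw')=Lw'$. The condition $\Gamma_2 w'=g$ thus gives $Lw'=g$, i.e.\ $w'=L^{-1}g=w$.

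The main obstacle, and essentially the only nontrivial identification, is the equality $\Gamma_2(y-h)=g$, which requires the orthogonality $L_0 y_0\perp\mathscr{K}$; this is not one of the cited boundary triple identities but follows directly from the definition $\mathscr{K}={\rm Ker\,}L_0^*$. Once this is recognized, everything else reduces to mechanical application of (\ref{Eq Gamma1,2 y}) and (\ref{Eq L0,L}).
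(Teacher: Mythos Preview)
Your proof is correct and follows essentially the same route as the paper: verify that the Vishik components $h$ and $w=L^{-1}g$ satisfy the stated boundary value problems, then argue uniqueness via the boundary triple identities. Your uniqueness arguments are slightly more direct than the paper's (you compute $\Gamma_1 h'=-h'$ and $Lw'=g$ outright, whereas the paper takes differences $h-h'$, $w-w'$ and invokes ${\rm Dom\,}L\cap{\rm Ker\,}L_0^*=\{0\}$ and injectivity of $L_0$), but the underlying mechanism is the same.
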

\begin{proof}
Element $h$ obeying the second relation in (\ref{Eq abstract BVP
1}), does exist due to (\ref{Eq Ran Gamma 1,2}). It is unique.
Indeed, if $h'$ satisfies (\ref{Eq abstract BVP 1}) then $\tilde
y:=h-h'$ obeys $\Gamma_1 \tilde y=0$, i.e., $\tilde y\in {\rm
Dom\,} L$. The latter implies $\tilde y=0$ by virtue of ${\rm
Dom\,} L\cap{\rm Ker\,}{L_0^*}=\{0\}$ (see (\ref{Eq Vishik Decomp
Dom})).

Since ${L_0^*}{L_0^*} L^{-1}g={L_0^*} g=0$, $L^{-1}g\in {\rm
Dom\,} L$ is valid (so that $\Gamma_1L^{-1}g\overset{{\rm
see\,}(\ref{Eq L0,L})}=0$ and
$\Gamma_2L^{-1}g=\Gamma_2(y-y_0-h)\overset{(\ref{Eq
L0,L})}=\Gamma_2(y-h)$ hold), we see that $w=L^{-1}g$ solves
problem (\ref{Eq abstract BVP 2}). If $w'$ also solves it, for
$\tilde w:=w-w'$ one has $\Gamma_1\tilde w=\Gamma_2 \tilde w=0$
that leads to $\tilde w\in{\rm Dom\,} L_0$ by virtue of (\ref{Eq
L0,L}). Therefore, ${L_0^*} \tilde w= L_0\tilde w\in{\rm Ran\,}
L_0$ and, hence, ${L_0^*}\tilde w\bot{\rm Ker\,}{L_0^*}$. The
latter makes ${L_0^*}{L_0^*}\tilde w=0$ possible only if
${L_0^*}\tilde w=0$. Since ${L_0^*} \tilde w=L_0\tilde w=0$, we
arrive at $\tilde w=0$ by injectivity of $L_0$.
\end{proof}

Thus, to determine $h$ and $g$ in (\ref{Eq Vishik Decomp}), one
can find $h$ from (\ref{Eq abstract BVP 1}), solve (\ref{Eq
abstract BVP 2}) and then get $g=Lw$.

\subsubsection*{Example}

\noindent$\bullet$\,\,\, As an illustration, we consider the
Laplace operator. Let $(\Omega,g)$ be a compact smooth\footnote{In
the subsequent, {\it smooth} always means $C^\infty$-smooth.}
Riemannian manifold of dimension $n\geqslant 2$ \footnote{the case
$\Omega\subset\mathbb R^n$ is quite suitable for our goals.} with
the smooth connected boundary $\Gamma$, let $\Delta$ be the
Laplace-Beltrami differential operator in $\Omega$.

Let $H^p(\Omega), \,\,p=1,2$, $H^1_0(\Omega)=\{y\in
H^1(\Omega)\,|\,\,y\big|_{\Gamma}=0\}$ and $H^2_0(\Omega)=\{y\in
H^2(\Omega)\,|\,\,y\big|_{\Gamma}=\partial_\nu
y\big|_{\Gamma}=0\}$ be the Sobolev spaces ($\nu$ is the outward
normal on $\Gamma$). We put ${\mathscr H}:=L_2(\Omega)$ and denote
by ${\rm Harm}(\Omega):=\{h\in{\mathscr H}\,|\,\,\Delta
h=0\,\,{\rm in}\,\,\Omega\setminus\Gamma\}$ the subspace of
harmonic functions. The following is the well-known facts.

The operator ({\it minimal Laplacian})
$L_0:=\overline{-\Delta{\upharpoonright}
C^\infty_0(\Omega)}=-\Delta{\upharpoonright} H^2_0(\Omega)$ is
positive definite. Its adjoint ({\it maximal Laplacian}) is
${L_0^*}=-\Delta{\upharpoonright}[H^2(\Omega)+{\rm
Harm}(\Omega)]$, and ${\mathscr K}={\rm Ker\,}{L_0^*}={\rm
Harm}(\Omega)$ holds. The Friedrichs extension of $L_0$ is
$L=-\Delta{\upharpoonright}[H^2(\Omega)\cap H^1_0(\Omega)]$.
\smallskip

\noindent$\bullet$\,\,\, By (\ref{Eq Vishik Decomp}) and (\ref{Eq
Gamma1,2 y}), to describe how the boundary operators
$\Gamma_{1,2}$ act, one needs to show how to find the harmonic
functions $h$ and $g$ for a given $y\in{\rm Dom\,}{L_0^*}$. Since
$y_0$ and $L^{-1}g$ belong to $H^1_0(\Omega)$, we have $y=h$ on
$\Gamma$. Thus, the function $h$ can be specified as the (unique)
solution of the Dirichlet problem
\begin{equation}\label{Eq concrete BVP 1}
\Delta h=0 \quad{\rm in}\,\,\,\Omega\setminus\Gamma;\qquad
h=y\quad{\rm on}\,\,\,\Gamma.
\end{equation}
To find the harmonic $g$, we recall that the summand $y_0$ in
(\ref{Eq Vishik Decomp}) belongs to ${\rm Dom\,}
L_0=H^2_0(\Omega)$ and, hence, obeys $\partial_\nu
y_0\big|_\Gamma=0$. This implies
$$
\partial_\nu L^{-1}g\overset{(\ref{Eq Vishik
Decomp})}=\partial_\nu(y-y_0-h)=\partial_\nu(y-h).
$$
In the mean time, we have $L^{-1}g\in H^1_0(\Omega)$  and
${L_0^*}{L_0^*} L^{-1}g={L_0^*} g=0$. As a result, $L^{-1}g$ obeys
\begin{equation}\label{Eq concrete BVP 2}
\Delta^2 (L^{-1}g)=0 \quad{\rm
in}\,\,\,\Omega\setminus\Gamma;\quad
L^{-1}g=0,\,\,\,\partial_\nu(L^{-1}g)=\partial_\nu(y-h)\,\,\,{\rm
on}\,\,\,\Gamma
\end{equation}
($h$ is already known). Solving this well-posed Cauchy problem for
the biharmonic equation, we get $L^{-1}g$ and then find $g=\Delta
L^{-1}g$.
\smallskip

As is easy to recognize, (\ref{Eq concrete BVP 1}) and (\ref{Eq
concrete BVP 2}) are some concrete versions of (\ref{Eq abstract
BVP 1}) and (\ref{Eq abstract BVP 2}) respectively. We get rights
to claim that ${L_0^*} h=0$ and ${L_0^*}^2 w=0$ are the abstract
Laplace and biharmonic equations respectively.

\section{Dynamics determined by operator $L_0$}

\subsubsection*{System $\alpha$}

\noindent$\bullet$\,\,\, The boundary triple, in turn, determines
a {\it dynamical system with boundary control} (DSBC)  of the form
\begin{align}
\label{Eq 1}& {\ddot u}+L_0^*u = 0  && {\rm in}\,\,\,{{\mathscr H}}, \,\,\,t>0;\\
\label{Eq 2}& u|_{t=0}={\dot u}|_{t=0}=0 && {\rm in}\,\,\,{{\mathscr H}};\\
\label{Eq 3}& \Gamma_1 u = f && {\rm in}\,\,\,{{\mathscr
K}},\,\,\,t \geqslant 0,
\end{align}
where $\dot{(\,\,)}:=\frac{d}{dt}$; $f=f(t)$ is a ${\mathscr
K}$-valued function of time ({\it boundary control}). By
$u=u^f(t)$ we denote the solution ({\it wave}). For short, we call
(\ref{Eq 1})--(\ref{Eq 3}) just system $\alpha$; function
$u^f(\cdot)$ is its {\it trajectory}.

Introduce the class of smooth controls
$$
{\mathscr M}:=\{f\in C^\infty([0,\infty);{\mathscr K})\,|\,\,{\rm
supp\,}f\subset(0,\infty)\},
$$
vanishing near $t=0$. As is shown in \cite{BD_DSBC}, for each
$f\in{\mathscr M}$ there exists a unique classical solution
$u=u^f(t)\in  C^\infty([0,\infty);{\mathscr H})$ (a {\it smooth
wave}) obeying $u^f(t)\in{\rm Dom\,}{L_0^*},\,\,\,t\geqslant 0$
and represented in the form
\begin{equation}\label{Eq u^f}
u^f(t)=-f(t)+L^{-{1\over 2}}\int_0^t\sin\,[(t-s)L^{1\over
2}]\,{\ddot f}(s)\,ds,\qquad t\geqslant 0.
\end{equation}
Integrating by parts, we get the equivalent representation
\begin{equation}\label{Eq u^f+}
u^f(t)=-f(t)+L^{-1}\int_0^t\left\lbrace1-\cos\,[(t-s)L^{1\over
2}]\right\rbrace\,\overset{...}{f}(s)\,ds=-f(t)+u^f_L(t),\quad
t\geqslant 0,
\end{equation}
where the summand $u^f_L(t)\in{\rm Dom\,} L$ corresponds to the
element $y'\in{\rm Dom\,} L$ in decomposition (\ref{Eq Vishik
Decomp}).

The right hand side in (\ref{Eq u^f}) is meaningful for any $f\in
H^2_{\rm loc}([0,\infty); {\mathscr K})$ vanishing near $t=0$. For
such controls, it defines a (generalized) solution $u^f$ to
(\ref{Eq 1})--(\ref{Eq 3}). The following is some of its
properties.
\smallskip

\noindent$\bullet$\,\,\, In what follows, we assume that all time
functions are extended to $t< 0$ {\it by zero}.

Let a map ${\cal T}_\tau$ delay functions by the rule $({\cal
T}_\tau w)(t):=w(t-\tau)$. Since the operator ${L_0^*}$ which
governs the evolution of system $\alpha$, does not depend on time,
the steady state relation
\begin{equation}\label{Eq steady state}
u^{{\cal T}_\tau f}(t)=({\cal T}_\tau u^f)(t)=u^f(t-\tau),\qquad
t\geqslant 0,\,\,\,\tau>0
\end{equation}
and its consequence
\begin{equation}\label{Eq steady state+}
u^{\dot f}(t)= {\dot u}^f(t),\quad\,u^{\ddot f}(t)= {\ddot
u}^{f}(t)\overset{(\ref{Eq 1})}=-L_0^*u^f(t),\qquad t\geqslant 0
\end{equation}
hold.
\smallskip

\noindent$\bullet$\,\,\, The following is the control theory
attributes of system $\alpha$.
\smallskip

The space of controls (inputs) $\mathscr F:=L_2^{\rm loc}(\mathbb
R_+;\mathscr K)$ is an {\it external space}. It contains a family
of delayed controls $\mathscr F_\tau:=\mathcal T_\tau\mathscr
F,\,\,\,\tau>0$ and the smooth class $\mathscr M$, which satisfies
\begin{equation}\label{Eq M}
\frac{d^p}{dt^p}\mathscr M=\mathscr M,\qquad p=1,2,\dots
\end{equation}
and is locally dense in $\mathscr F$:
$\overline{\{f\big|_{[0,T]}\,|\,\,f\in\mathscr
M\}}=L_2([0,T];\mathscr K),\qquad T>0$.
\smallskip

The space $\mathscr H$ is an {\it internal space}, the waves
(states) $u^f(t)$ are its elements. It contains an increasing
family of {\it reachable sets} $\mathscr
U^\tau:=\{u^f(\tau)\,|\,\,f\in\mathscr M\},\,\,\,\tau\geqslant 0$
and the total reachable set $\mathscr U:={\rm span\,}\{\mathscr
U^\tau\,|\,\,\tau\geqslant 0\}$. By (\ref{Eq M}) and (\ref{Eq
steady state+}) an invariance of reachable sets
\begin{equation}\label{Eq invar U tau}
L_0^*\mathscr U^\tau=\mathscr U^\tau,\,\,\, \tau \geqslant
0;\qquad L_0^*\mathscr U=\mathscr U
\end{equation}
holds.
\smallskip

In system $\alpha$, the input-state map is
$$
W^\tau:\mathscr F\to {\mathscr H},\,\,{\rm Dom\,}W^\tau=\mathscr
M,\,\,\,\,W^\tau f:=u^f(\tau),\qquad \tau\geqslant 0.
$$
\begin{Lemma}\label{L closability W}
For any $\tau>0$, the map $W^\tau$ is closable.
\end{Lemma}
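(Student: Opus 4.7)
The claim that $W^\tau$ is closable amounts to showing: if $\{f_n\}\subset\mathscr{M}$ satisfies $f_n\to 0$ in $\mathscr{F}$ (so in particular in $L_2([0,\tau];\mathscr{K})$) and $u^{f_n}(\tau)\to y$ in $\mathscr{H}$, then $y=0$. My plan is to establish a weak representation
\[
\langle u^f(\tau),v\rangle\;=\;-\int_0^\tau\bigl\langle f(s),\,P\sin[(\tau-s)L^{1/2}]\,L^{1/2}v\bigr\rangle_{\mathscr{K}}\,ds,\qquad v\in{\rm Dom\,}L,
\]
and then deduce closability by density of ${\rm Dom\,}L$ in $\mathscr{H}$.

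To derive the displayed identity I would start from the explicit formula (\ref{Eq u^f}). Fix $v\in{\rm Dom\,}L$ and set $\varphi(s):=L^{-1/2}\sin[(\tau-s)L^{1/2}]v$; by the bounded functional calculus this is a $C^2$-curve in $\mathscr{H}$ with $\varphi(\tau)=0$, $\dot\varphi(\tau)=-v$ and $\ddot\varphi(s)=-\sin[(\tau-s)L^{1/2}]\,L^{1/2}v$. Pairing (\ref{Eq u^f}) with $v$ and moving the bounded self-adjoint factor $L^{-1/2}\sin[(\tau-s)L^{1/2}]$ onto $v$ yields $\langle u^f(\tau),v\rangle=-\langle f(\tau),v\rangle+\int_0^\tau\langle\ddot f(s),\varphi(s)\rangle\,ds$. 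Two integrations by parts on the remaining integral, using $f(0)=\dot f(0)=0$ (because ${\rm supp\,}f\subset(0,\infty)$) and $\varphi(\tau)=0$, $\dot\varphi(\tau)=-v$, collapse all boundary contributions except $-\langle f(\tau),\dot\varphi(\tau)\rangle=\langle f(\tau),v\rangle$, which cancels the leading $-\langle f(\tau),v\rangle$. What remains is $\int_0^\tau\langle f(s),\ddot\varphi(s)\rangle\,ds$; inserting the projector $P$ for free (since $f(s)\in\mathscr{K}$) produces the displayed formula.

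From $\|\sin[(\tau-s)L^{1/2}]\,L^{1/2}v\|\leqslant\|L^{1/2}v\|$ and Cauchy--Schwarz one has $|\langle u^f(\tau),v\rangle|\leqslant\sqrt{\tau}\,\|L^{1/2}v\|\cdot\|f\|_{L_2([0,\tau];\mathscr{K})}$, so the functional $f\mapsto\langle u^f(\tau),v\rangle$ is $L_2$-continuous on $\mathscr{M}$. Consequently $\langle u^{f_n}(\tau),v\rangle\to 0$ for every $v\in{\rm Dom\,}L$; combined with the assumed strong convergence $u^{f_n}(\tau)\to y$ this forces $\langle y,v\rangle=0$ on a dense set, hence $y=0$. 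The only delicate point is the spectral bookkeeping that legitimizes the two strong integrations by parts in $\mathscr{H}$ and the commutation of $L^{1/2}$ with $\sin[(\tau-s)L^{1/2}]$; restricting the test vector $v$ to ${\rm Dom\,}L$ (rather than the larger ${\rm Dom\,}L^{1/2}$) keeps these manipulations routine.
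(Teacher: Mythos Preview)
Your proof is correct and follows essentially the same route as the paper: both arguments pair formula (\ref{Eq u^f}) with a test vector, integrate by parts twice to eliminate $\ddot f$ in favor of $f$, observe that the boundary terms cancel, and conclude closability from the resulting $L_2$-boundedness. The only cosmetic difference is that the paper first applies $L^{-1/2}$ and then tests against an arbitrary $\phi\in\mathscr H$, obtaining the operator identity $L^{-1/2}u^f(\tau)=\int_0^\tau\sin[(\tau-s)L^{1/2}]f(s)\,ds$ (up to sign) and finishing via injectivity of $L^{-1/2}$, whereas you keep $u^f(\tau)$ and restrict the test vector to $v\in{\rm Dom\,}L$; the two computations are dual to one another.
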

\begin{proof}
$\bf 1.$\,\,\,Take an arbitrary $\phi\in{\mathscr H}$. Applying
$L^{-{1\over2}}$ in (\ref{Eq u^f}), we have
\begin{equation}\label{Eq aux 1}
(L^{-{1\over2}}u^f(\tau),\phi)=-(L^{-{1\over2}}f(\tau),\phi)+
(L^{-1}\int_0^\tau\sin[(\tau-s)L^{{1\over2}}]{\ddot
f}(s)\,ds,\phi)=:-I+I\!I.
\end{equation}
Since $L=L^*$, the second summand is of the form
\begin{align*}
& I\!I= \int_0^\tau ds\,(\ddot f(s),\psi(s))
\end{align*}
with $\psi(s):=L^{-1}\sin[(\tau-s)L^{1\over2}]\phi$ obeying
$\ddot\psi(s)=-\sin[(\tau-s)L^{1\over2}]\phi \in {\mathscr H}$.
Then one can easily justify integration by parts:
\begin{align*}
& I\!I=\left[(\dot
f(s),L^{-1}\sin[(\tau-s)L^{{1\over2}}]\phi)+(f(s),L^{-{1\over2}}\cos[(\tau-s)L^{{1\over2}}]\phi)
\right]\bigg|_{s=0}^{s=\tau}+\\
&+\int_0^\tau
(f(s),\sin[(\tau-s)L^{{1\over2}}]\phi)\,ds=(f(\tau),L^{-{1\over2}}\phi)+\\
& +\int_0^\tau
(f(s),\sin[(\tau-s)L^{{1\over2}}]\phi)\,ds=(L^{-{1\over2}}f(\tau),\phi)+\int_0^\tau
(\sin[(\tau-s)L^{{1\over2}}]f(s),\phi)\,ds.
\end{align*}
Substituting $I\!I$ in the form of the latter sum to (\ref{Eq aux
1}) and taking into account the arbitrariness of $\phi$, one
represents
\begin{equation}\label{Eq aux 2}
L^{-{1\over2}}u^f(\tau)=\int_0^\tau
\sin[(\tau-s)L^{{1\over2}}]f(s)\,ds.
\end{equation}

$\bf 2.$\,\,\,In view of (\ref{Eq u^f}), the value of the wave
$u^f(\tau)$ is determined by the values $f\big|_{0\leqslant
t\leqslant\tau}$ of  control (does not depend on
$f\big|_{t>\tau}$). Let $f\big|_{[0,\tau]}\to 0$ in
$L_2([0,\tau];{\mathscr K})$ and $u^f(\tau)\to y$ in ${\mathscr
H}$. The limit passage in (\ref{Eq aux 2}) leads to
$L^{-{1\over2}}y=0$. The latter implies $y=0$.
\smallskip

Thus, $f\to 0$ and $W^\tau f\to y$ yields $y=0$, i.e., $W^\tau$ is
closable.
\end{proof}

Denote $\mathscr F^\tau:=L_2([0,\tau];{\mathscr K})$ and $\mathscr
M^\tau:=\{f\big|_{[0,\tau]}\,|\,\,f\in\mathscr M\}$. By (\ref{Eq
u^f}), the value $u^f(\tau)$ of the wave is determined by the
values $f\big|_{0\leqslant t\leqslant\tau}$ of control (does not
depend on $f\big|_{t>\tau}$). Therefore, the map
$$
W^\tau:\mathscr F^\tau\to {\mathscr H},\,\,\,{\rm
Dom\,}W^\tau=\mathscr M,\,\,\,\,W^\tau f:=u^f(\tau)
$$
is well defined and closable for all $\tau>0$.

\subsubsection*{Example}

\noindent$\bullet$\,\,\, In the example chosen above as
illustration, we have ${\mathscr K}={\rm Harm\,}(\Omega)$, whereas
the bijection ${\rm Harm\,}(\Omega)\ni y \leftrightarrow
y\big|_\Gamma$ does occur. Therefore, by (\ref{Eq Gamma1,2 y}),
system $\alpha$ can be written in the equivalent (traditional)
form of an initial - boundary value problem
\begin{align}
\label{Eq 1+}& u_{tt}-\Delta u = 0  && {\rm in}\,\,\,(\Omega\setminus\Gamma)\times \mathbb R_+;\\
\label{Eq 2+}& u|_{t=0}=u_t|_{t=0}=0 && {\rm in}\,\,\,\Omega;\\
\label{Eq 3+}& u = f && {\rm
on}\,\,\,\Gamma\times\overline{\mathbb R_+},
\end{align}
which describes propagation of wave $u=u^f(x,t)$ in $\Omega$, the
wave being initiated by the boundary source (control)
$f=f(\gamma,t)$. Let us recall some of its known properties.


For smooth controls of the class ${\mathscr M}:=\{f\in
C^\infty(\Gamma\times\overline{\mathbb R_+})\,|\,\,{\rm
supp\,}f\subset\Gamma\times\mathbb R_+\}$ vanishing near $t=0$,
system (\ref{Eq 1+})--(\ref{Eq 3+}) has a unique classical
solution $u^f\in C^\infty(\Omega\times\overline{\mathbb R_+})$.

For all $\tau>0$, the map $W^\tau:f\big|_{[0,\tau]}\mapsto
u^f(\cdot,\tau)$ is continuous from ${\mathscr
F}^\tau:=L_2(\Gamma\times[0,\tau])$ to ${\mathscr H}$ \,
\cite{LTrDynamReported}.
\smallskip

Let $\Omega^r:=\{x\in\Omega\,|\,\,{\rm dist}(x,\Gamma)<r\}$ be a
metric neighborhood of the boundary of radius $r>0$; denote
$T_*:=\inf\,\{r>0\,|\,\,\Omega^r=\Omega\}$. The relation
\begin{equation}\label{Eq supp u^f}
{\rm supp\,}u^f(\cdot,t)\subset\overline{\Omega^t},\qquad t>0
\end{equation}
holds and shows that waves move from the boundary into the
manifold with velocity $\leqslant 1$. At the moment $t=T_*$ the
waves fill up the whole $\Omega$. In what follows we refer to
these facts as a {\it finiteness of wave propagation speed}
principle (FS principle). It corresponds to a hyperbolicity of the
initial boundary value problem (\ref{Eq 1+})--(\ref{Eq 3+}).
\smallskip

\noindent$\bullet$\,\,\, Introduce the reachable sets ${\mathscr
U}^t:=\{u^f(\cdot,t)\,|\,\,f\in{\mathscr M}\}$ and note the
evident relation ${\mathscr U}^s\subset{\mathscr U}^t$ for $s<t$.
As a consequence of (\ref{Eq supp u^f}), we have the embedding
${\mathscr U}^\tau\subset{\mathscr H}^\tau:=\{y\in{\mathscr
H}\,|\,\,{\rm supp\,}y\subset\overline{\Omega^\tau}\}$. A
remarkable fact, which is interpreted as a local approximate
boundary controllability of system $\alpha$, is that this
embedding is dense: the equality
\begin{equation}\label{Eq U subset H}
\overline{{\mathscr U}^\tau}\,=\,{\mathscr H}^\tau,\qquad \tau>0
\end{equation}
holds\, \cite{B EACM}, \cite{Tat}. Since $\Omega$ is compact, for
$\tau\geqslant T_*$ relation (\ref{Eq U subset H}) implies
$\overline{{\mathscr U}^\tau}={\mathscr H}$, so that system
(\ref{Eq 1+})--(\ref{Eq 3+}) is controllable for any moment
$T>T_*$.
\smallskip

Note in addition that, in the given Example, the family of the
projectors $P^\tau$ in $\mathscr H$ onto $\overline{{\mathscr
U}^\tau}$ is continuous with respect to $\tau$. However, in the
system governed by the Maxwell equations, the relevant family may
have infinite-dimensional breaks: see \cite{Demch}. Thus, the
continuity of $\overline{{\mathscr U}^\tau}$ is definitely not a
general fact.

\subsubsection*{Controllability of system $\alpha$}

\noindent$\bullet$\,\,\, For the abstract system $\alpha$ of the
form (\ref{Eq 1})--(\ref{Eq 3}) ant its reachable sets ${\mathscr
U}^\tau=\{u^f(\tau)\,|\,\,f\in{\mathscr M}\}$, to discuss property
(\ref{Eq U subset H}) is meaningless because there is no analog of
the subspaces ${\mathscr H}^\tau$. Nevertheless, the question can
be posed for the total reachable set $\mathscr U$ as follows. We
say that system $\alpha$ is controllable if the equality
\begin{equation}\label{Eq abstract contr}
\overline{{\mathscr U}}\,=\,{\mathscr H}
\end{equation}
holds. If (\ref{Eq abstract contr}) is not valid, we say
${\mathscr D}:={\mathscr H}\ominus\overline{\mathscr U}$ to be a
defect (unreachable) subspace. The question is on the conditions
which provide (\ref{Eq abstract contr}).  The answer is the
following.

Let $A$ be an operator in a Hilbert space ${\mathscr H}$ and
${\mathscr G}\subset{\mathscr H}$ be a subspace. We say that
${\mathscr G}$ is an {\it invariant subspace} of $A$ if
$\overline{{\rm Dom\,} A \cap{\mathscr G}}={\mathscr G}$ and
$A({\rm Dom\,} A \cap{\mathscr G})\subset{\mathscr G}$ hold
\footnote{see some comments to this definition in \cite{BSim_4}},
whereas operator $A_{\mathscr G}:{\mathscr G}\to{\mathscr
G},\,\,{\rm Dom\,} A_{\mathscr G}={\rm Dom\,} A\cap{\mathscr
G},\,\,A_{\mathscr G} y:=Ay$ is called a part of $A$ in ${\mathscr
G}$.

A symmetric densely defined operator $A$ is said to be {\it
completely non-self-adjoint} if it has no (substantially)
self-adjoint parts, i.e, there are no parts $A_{\mathscr G}$,
which satisfy $A_{\mathscr G}^*=\overline{A_{\mathscr G}}$.
\smallskip

System $\alpha$ is determined by operator $L_0$. As is shown in
\cite{BD_DSBC}, it is controllable, i.e., (\ref{Eq abstract
contr}) holds, if and only if $L_0$ is completely
non-self-adjoint.
\smallskip

\noindent$\bullet$\,\,\, There exist dynamical systems (\ref{Eq
1})--(\ref{Eq 3}), in which (\ref{Eq U subset H}) takes the form
$\overline{{\mathscr U}^\tau}={\mathscr H}$ for any $\tau>0$. As
example, one can mention the system on $\Omega\subset \mathbb R^n$
governed by the Euler-Bernoully equation $u_{tt}+\Delta^2u=0$ and
relevant boundary controls \cite{LT}. The following agreement
excludes such cases from consideration as trivial ones.
\begin{Convention}\label{Conv 1}
For system $\alpha$, we assume that there are $\tau>0$ such that
$\overline{{\mathscr U}^{\tau+\epsilon}}\ominus\overline{{\mathscr
U}^{\tau}}\not=\{0\}$ holds for any $\epsilon>0$.
\end{Convention}
In other words, it is assumed that the family of the reachable
subspaces $\overline{{\mathscr U}^{\tau}}$ does have the positive
growth points.

In the concrete systems realizing the trivial case, the FS
principle is not in force: (\ref{Eq supp u^f}) is broken and the
waves propagate with infinite velocity. If the condition in
Convention \ref{Conv 1} is violated, our further results remain
true but become trivial. For the rest of the paper, Convention
\ref{Conv 1} is accepted.

\subsubsection*{System $\beta$}

\noindent$\bullet$\,\,\, Consider a dynamical system $\beta$ of
the form
\begin{align}
\label{Eq beta 1}& \ddot v+Lv = \psi  && {\rm in}\,\,\,{{\mathscr H}}, \,\,\,t>0;\\
\label{Eq beta 2}& v|_{t=0}=\dot v|_{t=0}=0 && {\rm
in}\,\,\,{{\mathscr H}},
\end{align}
controlled by a {\it source} $\psi$, which is an ${\mathscr
H}$-valued function of time. By $v=v^\psi(t)$ we denote the
solution. For smooth sources of the class
$$
{\mathscr N}:=\left\{\psi\in C^\infty([0,\infty);{\mathscr
H})\,\big|\,\,{\rm supp\,} \psi\subset(0,\infty)\right\}
$$
vanishing near $t=0$, the solution is unique, classical, smooth,
is represented in the form
\begin{equation}\label{Eq v^psi repres general}
v^\psi(t)=L^{-{1\over 2}}\int_0^t\sin[(t-s)L^{{1\over
2}}]\,\psi(s)\,ds,\qquad t\geqslant 0
\end{equation}
and belongs to ${\rm Dom\,} L$ for any $t$. By (\ref{Eq L0,L}),
the latter implies
\begin{equation}\label{Eq Gamma1 v^psi=0}
\Gamma_1 v^\psi(t)=0,\qquad t\geqslant0.
\end{equation}
The right hand side of (\ref{Eq v^psi repres general}) makes sense
for $\psi\in  L^{\rm loc}_2([0,\infty);{\mathscr H})$ and is
referred to as a generalized solution. If $\psi\in H^1_{\rm
loc}([0,\infty);{\mathscr H})$ then integration by parts in
(\ref{Eq v^psi repres general}) provides
\begin{equation*}
v^\psi(t)=L^{-1}\int_0^t\left\{\mathbb I-\cos[(t-s)L^{{1\over
2}}]\right\}\,\dot\psi(s)\,ds,\qquad t\geqslant0.
\end{equation*}
In this case, $v^\psi(t)\in{\rm Dom\,} L$ holds and relation
(\ref{Eq Gamma1 v^psi=0}) remains valid.
\smallskip

\noindent$\bullet$\,\,\, As a partial case, we deal with the
instantaneous sources $\psi=\delta(t)y$, where $y\in{\mathscr H}$
and $\delta(\cdot)$ is the Dirac delta-function, and the
corresponding solutions $v=v^{\delta y}(t)=:v^y(t)$ of the system
\begin{align}
\label{Eq beta 1'}& \ddot v+Lv = 0  && {\rm in}\,\,\,{{\mathscr H}}, \,\,\,t>0;\\
\label{Eq beta 2'}& v|_{t=0}=0,\,\,\, \dot v|_{t=0}=y && {\rm
in}\,\,\,{{\mathscr H}}.
\end{align}
The solution is represented in the form
\begin{equation}\label{Eq v^delta y repres general}
v^{y}(t)=L^{-{1\over 2}}[\sin (t\,L^{{1\over 2}})]\,y,\qquad
t\geqslant0.
\end{equation}
A generalized solution $v^y$ is well defined for any
$y\in{\mathscr H}$. If $y\in{\rm Dom\,} L^{{1\over 2}}$ then
$v^y(t)\in{\rm Dom\,} L$ and property (\ref{Eq Gamma1 v^psi=0})
holds for $v^y$:
\begin{equation}\label{Eq Gamma1 v^y=0}
\Gamma_1 v^y(t)=0,\qquad t\geqslant0.
\end{equation}
\smallskip

\noindent$\bullet$\,\,\, Let us consider the relations between
systems $\alpha$ and $\beta$.
\begin{Lemma}\label{L dual system}
For any $T>0$, the relation
\begin{equation}\label{Eq (W^T)*}
(u^f(T),y)=-\int_0^T(f(t),\Gamma_2 v^y(T-t))\,dt,\qquad f\in
{\mathscr M},\,\,y\in{\rm Dom\,}L
\end{equation}
is valid. 
\end{Lemma}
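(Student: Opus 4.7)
The strategy is the standard duality argument based on Green's identity applied to the wave operator in time. I would introduce the mixed energy pairing
\[
\phi(t) := (\dot u^f(t), v^y(T-t)) + (u^f(t), \dot v^y(T-t)),\qquad 0\leqslant t\leqslant T,
\]
with the aim of recovering $(u^f(T),y)$ as the telescoping difference $\phi(T)-\phi(0)$, and controlling $\phi'(t)$ via the Green formula (\ref{Eq Green}).

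First I would evaluate the endpoints. At $t=0$ the initial conditions (\ref{Eq 2}) give $u^f(0)=\dot u^f(0)=0$, so $\phi(0)=0$. At $t=T$ the initial conditions (\ref{Eq beta 2'}) for $v^y$ give $v^y(0)=0$ and $\dot v^y(0)=y$, hence $\phi(T)=(u^f(T),y)$. So once $\phi$ is $C^1$ on $[0,T]$, we obtain $(u^f(T),y)=\int_0^T\phi'(t)\,dt$.

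Next I would differentiate $\phi$ and note that the two cross terms $\pm(\dot u^f(t),\dot v^y(T-t))$ cancel, yielding
\[
\phi'(t)=(\ddot u^f(t), v^y(T-t))-(u^f(t), \ddot v^y(T-t)).
\]
Substituting equation (\ref{Eq 1}) for $\ddot u^f$ and equation (\ref{Eq beta 1'}) for $\ddot v^y$ gives
\[
\phi'(t)=-(L_0^* u^f(t), v^y(T-t))+(u^f(t), L v^y(T-t)).
\]
Since $y\in\mathrm{Dom}\,L$ the spectral representation (\ref{Eq v^delta y repres general}) places $v^y(T-t)$ in $\mathrm{Dom}\,L\subset\mathrm{Dom}\,L_0^*$, so that $Lv^y(T-t)=L_0^* v^y(T-t)$. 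Applying the Green identity (\ref{Eq Green}) to the pair $u^f(t),v^y(T-t)\in\mathrm{Dom}\,L_0^*$, and using $\Gamma_1 u^f(t)=f(t)$ from (\ref{Eq 3}) together with $\Gamma_1 v^y(T-t)=0$ from (\ref{Eq Gamma1 v^y=0}), the bracket collapses to $(f(t),\Gamma_2 v^y(T-t))$. Thus $\phi'(t)=-(f(t),\Gamma_2 v^y(T-t))$ and integrating from $0$ to $T$ produces the stated formula.

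The only substantive point to monitor is regularity, and I expect this to be the mildest of obstacles. For $f\in\mathscr M$ the smoothness of $u^f$ and the inclusion $u^f(t)\in\mathrm{Dom}\,L_0^*$ for all $t\geqslant 0$ are exactly the properties recalled after (\ref{Eq u^f}). For $y\in\mathrm{Dom}\,L$ the spectral formula (\ref{Eq v^delta y repres general}) shows that $v^y\in C^2([0,\infty);\mathscr H)$, $v^y(t)\in\mathrm{Dom}\,L$, $\dot v^y(t)=\cos(tL^{1/2})y\in\mathscr H$, and $\ddot v^y+Lv^y=0$ in $\mathscr H$ for all $t\geqslant 0$. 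These facts make $\phi\in C^1([0,T])$ and justify the pointwise use of $L_0^*$ and of Green's identity in the computation above; no approximation argument is required.
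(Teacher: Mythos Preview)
Your proof is correct and follows essentially the same route as the paper's: the paper starts from $0=\int_0^T(\ddot u^f+L_0^*u^f,\,v^y(T-t))\,dt$ and integrates by parts, which produces exactly your boundary term $\phi(t)=(\dot u^f(t),v^y(T-t))+(u^f(t),\dot v^y(T-t))$ evaluated at $t=0,T$, and then applies Green's identity (\ref{Eq Green}) together with (\ref{Eq 3}) and (\ref{Eq Gamma1 v^y=0}) in the same way. Packaging the argument as $\phi(T)-\phi(0)=\int_0^T\phi'(t)\,dt$ is a clean way to present the same computation, and your regularity remarks (using $y\in{\rm Dom}\,L\subset{\rm Dom}\,L^{1/2}$ to secure $v^y\in C^2$ and $v^y(t)\in{\rm Dom}\,L$) match what the paper relies on.
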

\begin{proof} By the choice of controls and
proper smoothness of the corresponding trajectories $u^f$ and
$v^y$, one can easily justify the following calculations:
\begin{align*}
& 0\overset{(\ref{Eq 1})}=\int_0^T(\ddot u^f(t)+{L_0^*}
u^f(t),v^{y}(T-t))\,dt=\int_0^T(\ddot
u^f(t),v^y(T-t))\,dt+\\
& + \int_0^T({L_0^*} u^f(t),v^y(T-t))\,dt=[(\dot
u^f(t),v^y(T-t))+(u^f(t),\dot v^y(T-t))]\big|_{t=0}^{t=T}+\\
& +\int_0^T(u^f(t),\ddot v^y(T-t))\,dt+\int_0^T({L_0^*}
u^f(t),v^y(T-t))\,dt=\\
& \overset{(\ref{Eq 2}),\,(\ref{Eq
2**})}=(u^f(T),y)+\int_0^T(u^f(t),\ddot
v^y(T-t))\,dt+\int_0^T({L_0^*} u^f(t),v^y(T-t))\,dt=\\
& \overset{(\ref{Eq Green})}=(u^f(T),y)+\int_0^T(u^f(t),\ddot
v^y(T-t)+{L_0^*} v^y(T-t))\,dt+\\
& +\int_0^T[(\Gamma_1 u^f(t),\Gamma_2 v^y(T-t))-(\Gamma_2
u^f(t),\Gamma_1 v^y(T-t))]\,dt=\\
&\overset{(\ref{Eq 3}),\,(\ref{Eq beta 1'}),\,(\ref{Eq Gamma1
v^y=0})}=(u^f(T),y)+\int_0^T(f(t),\Gamma_2 v^y(T-t))\,dt
\end{align*}
(we also use $L_0^*v^y=Lv^y$ by $L\subset L_0^*$). Comparing the
beginning with the end, we arrive at (\ref{Eq (W^T)*}). 
\end{proof}
One more relation between trajectories of systems $\alpha$ and
$\beta$ is the following. Quite analogous calculations starting
from the equality $0=\int_0^T(\ddot u^f(t)+{L_0^*} u^f(t),
v^\psi(T-t))\,dt$ lead to the relation
\begin{equation}\label{Eq (f,Gamma2 v psi)}
\int_0^T(u^f(t),\psi(T-t))\,dt=-\int_0^T(f(t),\Gamma_2v^\psi(T-t))\,dt,\,\,\,\,
f\in{\mathscr M},\,\,\psi\in{\mathscr N}.
\end{equation}
\smallskip

\noindent$\bullet$\,\,\, Let us derive a consequence of (\ref{Eq
(f,Gamma2 v psi)}). We say that a source $\psi$ acts from a
subspace ${\mathscr G}\subset{\mathscr H}$ if $\psi(t)\in
{\mathscr G}$ holds for all $t$.

Fix a positive $\sigma<T$; let $\psi$ act from
$\overline{{\mathscr U}}\ominus\overline{{\mathscr U}^\sigma}$
\footnote{Recall Convention \ref{Conv 1}\,!}. Also, let
$f\in\mathscr F_{T-\sigma}\cap{\mathscr M}$ be a delayed control.
For such a choice, by (\ref{Eq steady state}) we have
$u^f(t)\in{\mathscr U}^\sigma$ for all $0\leqslant t\leqslant T$
that obeys $(u^f(t),\psi(T-t))=0,\,\,\,0\leqslant t\leqslant T$.
Hence, the left integral in (\ref{Eq (f,Gamma2 v psi)}) vanishes
and we obtain
$$
\int_{T-\sigma}^T(f(t),\Gamma_2v^\psi(T-t))\,dt=\int_0^\sigma(f(T-t),\Gamma_2v^\psi(t))\,dt=0.
$$
As a result, by arbitrariness of $f$ we conclude that for any
smooth $\psi$ acting from $\overline{{\mathscr
U}}\ominus\overline{{\mathscr U}^\sigma}$ the relation
\begin{equation}\label{Eq (Gamma2 psi=0)}
\Gamma_2 v^\psi(t)=0,\qquad 0\leqslant t\leqslant\sigma,
\end{equation}
holds. It will be used later.

Quite analogously, by the use of (\ref{Eq (W^T)*}) for system
(\ref{Eq beta 1'})--(\ref{Eq beta 2'}),  the relation
$y\in\left[\overline{{\mathscr U}}\ominus\overline{{\mathscr
U}^\sigma}\right]\cap{\rm Dom\,}L$ implies
\begin{equation}\label{Eq (Gamma2 v^y=0)}
\Gamma_2 v^y(t)=0,\qquad 0\leqslant t\leqslant\sigma
\end{equation}
is derived.

\section{Abstract FS principle}

As was already mentioned, to discuss property (\ref{Eq supp u^f})
for abstract systems $\alpha$ and $\beta$  
is meaningless: there are no manifolds, domains, boundaries in
them. However, a remarkable fact is that a relevant analog of FS
principle for them 
does exist.
\smallskip

\noindent$\bullet$\,\,\, At first, let us turn to the Example and
clarify, which fact related to FS principle, we are going to
reveal in the abstract case.

The corresponding systems $\beta$ is
\begin{align*}
& v_{tt}-\Delta v = \psi  && {\rm in}\,\,\,(\Omega\setminus\Gamma)\times \mathbb R_+;\\
& v|_{t=0}=v_t|_{t=0}=0 && {\rm in}\,\,\,\Omega;\\
& v = 0 && {\rm on}\,\,\,\Gamma\times\overline{\mathbb R_+}.
\end{align*}
Fix $0<\sigma<\tau<T_*$. Assume that the source $\psi$ acts from
the subspace ${\mathscr H}^\tau\ominus{\mathscr H}^\sigma$, i.e.,
${\rm
supp\,}\psi(\cdot,t)=\Omega^\tau\setminus\Omega^\sigma,\,\,t>0$
holds. Thus, the source is located in a `layer'
$\Omega^\tau\setminus\Omega^\sigma$, which is separated from the
boundary $\Gamma$ by distance $\sigma$, whereas
$\Omega\setminus\overline{\Omega^\tau}$ is a nonempty open set. In
such a case, the wave $v^\psi$ propagates in both directions from
the layer with velocity 1. By the latter, at the moment $t>0$ it
is located in the bigger layer
$\Omega^{\tau+t}\setminus\Omega^{\sigma-t}$. I terms of subspaces,
this can be written as $v^\psi(t)\in {\mathscr
H}^{\tau+t}\ominus{\mathscr H}^{\sigma-t}$. It is the property
that has an abstract analog for system $\beta$.
\smallskip

\begin{minipage}[t]{\textwidth}
\centering
\begin{figure}[H]
    \includegraphics[width=30pc]{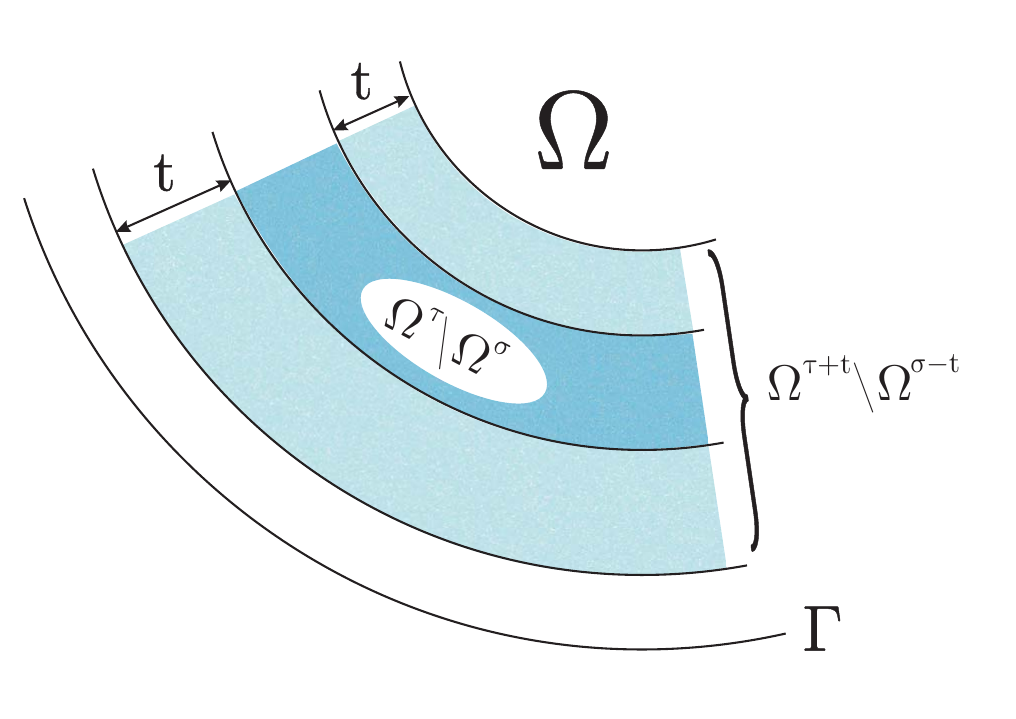}
    \caption{ Domains}
\end{figure}
\end{minipage}
\vspace{1pc}

\noindent$\bullet$\,\,\, The relevant analog is the following. For
convenience in formulation, we put ${\mathscr
U}^t\big|_{t<0}:=\{0\}$ and $\Psi:=L_2^{\rm loc}(\mathbb
R_+;\mathscr H)$.
\begin{Theorem}\label{T1}
Let \,$0<\sigma<\tau$ and let a source $\psi\in\Psi$ act from the
subspace $\overline{{\mathscr U}^\tau}\ominus\overline{{\mathscr
U}^\sigma}$. Then the relation $v^\psi(t)\in \overline{{\mathscr
U}^{\tau+t}}\ominus\overline{{\mathscr U}^{\sigma-t}}$ is valid
for all $t>0$.
\end{Theorem}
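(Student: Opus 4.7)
The plan is to reduce Theorem~\ref{T1} to a finite-speed propagation lemma for the homogeneous initial-velocity system~\eqref{Eq beta 1'}--\eqref{Eq beta 2'} via Duhamel's principle, then to prove that lemma by applying Lemma~\ref{L dual system} twice, with a functional-calculus computation of the $\Gamma_2$-trace of a ``wave of a wave'' in between.

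From~\eqref{Eq v^psi repres general} one has Duhamel's formula $v^\psi(t) = \int_0^t v^{\psi(s)}(t-s)\,ds$, so the theorem follows at once from the propagation lemma
\begin{equation*}
y \in \overline{\mathscr U^{T'}} \Longrightarrow v^y(r) \in \overline{\mathscr U^{T'+r}}, \qquad y \perp \overline{\mathscr U^T},\ r \le T \Longrightarrow v^y(r) \perp \overline{\mathscr U^{T-r}}.
\end{equation*}
The two assertions are dual through the self-adjointness identity $(v^y(r), z) = (y, v^z(r))$, valid because $L^{-1/2}\sin(rL^{1/2})$ is a bounded self-adjoint operator on $\mathscr H$, so it suffices to prove one of them. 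Granting the lemma, applying the forward implication with $y = \psi(s) \in \overline{\mathscr U^\tau}$ places $v^{\psi(s)}(t-s)$ in $\overline{\mathscr U^{\tau+t-s}} \subset \overline{\mathscr U^{\tau+t}}$; the ``$\perp$'' implication with $y = \psi(s) \perp \overline{\mathscr U^\sigma}$ places $v^{\psi(s)}(t-s)$ orthogonal to $\overline{\mathscr U^{\sigma-(t-s)}} \supset \overline{\mathscr U^{\sigma-t}}$; integrating over $s \in [0,t]$ delivers parts (a) and (b) respectively (the case $t \ge \sigma$ is trivial under the convention $\mathscr U^s = \{0\}$ for $s < 0$).

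To prove the ``$\perp$'' direction for $y \in \mathrm{Dom}\,L$ with $y \perp \overline{\mathscr U^T}$: Lemma~\ref{L dual system} applied with every $T' \le T$ gives $(u^f(T'), y) = -\int_0^{T'}(f(s), \Gamma_2 v^y(T'-s))\,ds$, and the left side vanishes because $u^f(T') \in \overline{\mathscr U^T}$; density of $\mathscr M$ in $L_2$ then forces $\Gamma_2 v^y \equiv 0$ on $[0,T]$. The decisive ``wave-of-a-wave'' computation is that $v^{v^y(r)}(\rho) = L^{-1}\sin(\rho L^{1/2})\sin(rL^{1/2})y$, so
\begin{equation*}
\Gamma_2 v^{v^y(r)}(\rho) = P\sin(\rho L^{1/2})\sin(rL^{1/2})y,
\end{equation*}
and differentiation together with the product-to-sum identity yields
\begin{equation*}
\partial_\rho \Gamma_2 v^{v^y(r)}(\rho) = \tfrac12 \bigl[\Gamma_2 v^y(\rho+r) - \mathrm{sgn}(\rho-r)\,\Gamma_2 v^y(|\rho-r|)\bigr],
\end{equation*}
whose right side vanishes on $\rho \in [0, T-r]$ since both $\rho + r$ and $|\rho - r|$ lie in $[0,T]$ there. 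Combined with $\Gamma_2 v^{v^y(r)}(0) = 0$ this gives $\Gamma_2 v^{v^y(r)} \equiv 0$ on $[0, T-r]$. Since $v^y(r) \in \mathrm{Dom}\,L$ (as $y \in \mathrm{Dom}\,L$), Lemma~\ref{L dual system} applies once more with $y$ replaced by $v^y(r)$ and gives $(u^f(T-r), v^y(r)) = 0$ for every $f \in \mathscr M$, i.e., $v^y(r) \perp \overline{\mathscr U^{T-r}}$.

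The principal remaining obstacle is extending the propagation lemma from $y \in \mathrm{Dom}\,L$ to arbitrary $y \in \mathscr H$ (needed because $\psi(s)$, and the test elements $u^g(T-r)$, in general lie only in $\mathrm{Dom}\,L_0^*$). I would approach this via the resolvent approximation $y_n := (I + \epsilon_n L)^{-1} y \in \mathrm{Dom}\,L$, contingent on showing that bounded Borel functions of $L$ preserve the closed subspace $\overline{\mathscr U^T}$---a delicate functional-analytic point reflecting the interplay between the $L_0^*$-invariance~\eqref{Eq invar U tau} and the Friedrichs extension. Alternatively, the duality $(v^y(r), z) = (y, v^z(r))$ allows one to keep the test element on the $\mathrm{Dom}\,L$ side, sidestepping one half of the density issue. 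Once this density step is handled, the rest of the proof is a clean abstraction of the d'Alembert/Holmgren reflection argument familiar from the concrete hyperbolic case.
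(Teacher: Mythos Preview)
Your reduction via Duhamel and the wave-of-a-wave identity are both correct, but the gap you flag at the end is genuine and neither of your suggested fixes closes it. The resolvent approximation $y_n=(I+\epsilon_n L)^{-1}y$ stays in $\overline{\mathscr U^T}^\perp$ only if $(I+\epsilon_n L)^{-1}$ preserves $\overline{\mathscr U^T}$; the sole invariance available a~priori is \eqref{Eq invar U tau}, which concerns $L_0^*$, not $L$ or its functional calculus, and in fact invariance of $\overline{\mathscr U^T}$ under bounded functions of $L$ is essentially the forward half of the lemma you are trying to prove. The duality sidestep is equally circular: by self-adjointness of $I^r$ the orthogonal direction for general $y$ is \emph{equivalent} to the forward direction for general $z$, so having the orthogonal direction only on ${\rm Dom}\,L\cap\overline{\mathscr U^T}^\perp$ yields merely $v^z(r)\perp\bigl({\rm Dom}\,L\cap\overline{\mathscr U^T}^\perp\bigr)$, which falls short of $v^z(r)\in\overline{\mathscr U^T}$ without the density you do not have.

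The paper avoids this trap by never reducing to single initial data. It works with the Blagoveshchenskii function $b(s,t)=(v^\psi(s),u^f(t))$ for \emph{smooth} $\psi\in\mathscr N$ and $f\in\mathscr M$, notes that $b_{tt}-b_{ss}=F$ is a $1{+}1$ wave equation, and integrates by D'Alembert over characteristic cones to obtain $v^\psi(t)\perp\overline{\mathscr U^{\sigma-t}}$ directly. Because $\psi$ is smooth in time, $v^\psi(t)\in{\rm Dom}\,L$ automatically, so the $\Gamma_2$-trace (your first step becomes the paper's \eqref{Eq (Gamma2 psi=0)}) is available for free; the passage to general $\psi\in\Psi$ is then a harmless $L_2$-in-time mollification that keeps values in the same closed subspace. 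Only after the orthogonal direction for sources is established does the paper deduce the single-vector forward statement $I^t\overline{\mathscr U^\tau}\subset\overline{\mathscr U^{\tau+t}}$ (Lemma~\ref{L I tU}), by pairing against an arbitrary smooth source acting from $\overline{\mathscr U}\ominus\overline{\mathscr U^{\tau+t}}$ and using self-adjointness of $I^t$. In short, your spectral-calculus identity is a correct reformulation of the D'Alembert relation the paper exploits, but by insisting on pointwise data $y$ rather than smooth-in-time sources you forfeit exactly the free regularity that makes the argument close.
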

\begin{proof} The proof consists of a few steps.

\noindent{\bf Step 1.}\,\,\,Here we derive an auxiliary relation.
By
$$
C_{s,t}:=\{(\xi,\eta)\in \overline{\mathbb R^2_+}\,|\,\,0\leqslant
\eta\leqslant t,\,\,s-t+\eta\leqslant\xi\leqslant s+t-\eta\}
$$
we denote a characteristic cone of the string equation
$u_{tt}-u_{ss}=0$.

\begin{figure}
\centering
\begin{minipage}[t]{\textwidth}
\centering
    \includegraphics[width=30pc]{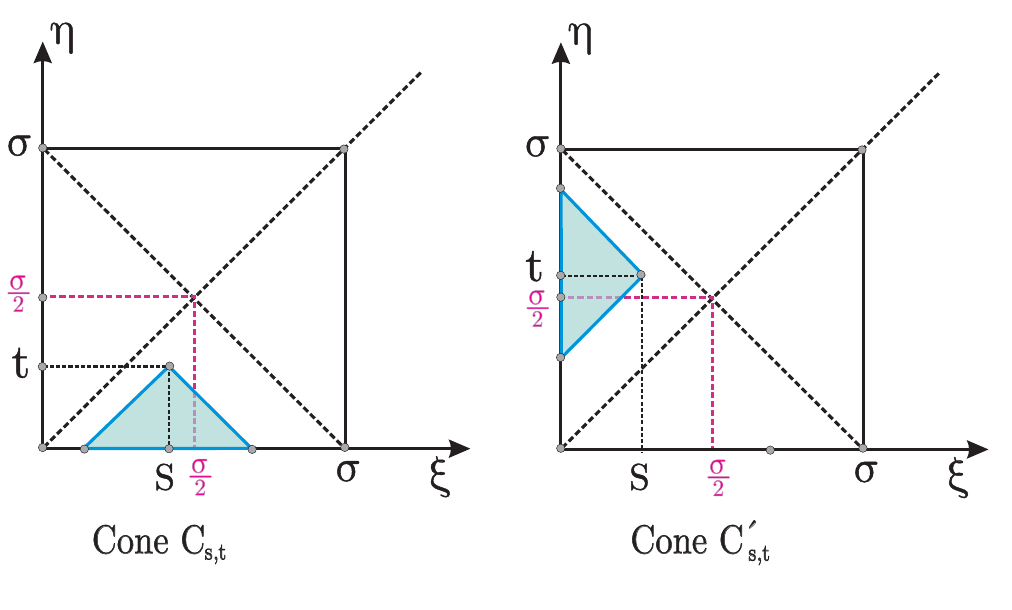}
    \caption{ Cones}
\end{minipage}
\end{figure}

\begin{Lemma}\label{L DAlembert 1}
Let $f\in{\mathscr M}$ and $\psi\in{\mathscr N}$ be a control and
a source in systems $\alpha$ and $\beta$. The relation
\begin{equation}\label{Eq DAlembert Cone C}
(v^\psi(s),u^f(t))=-{1\over 2}\int_{C_{s,t}}\left[(\Gamma_2
v^\psi(\xi),f(\eta))+(\psi(\xi),u^f(\eta))\right]\,d\xi\,d\eta,\quad
0\leqslant t\leqslant s
\end{equation}
is valid.
\end{Lemma}
\begin{proof}
For the Blagoveshchenskii function $b(s,t):=(v^\psi(s),u^f(t))$,
one has
\begin{align*}
& b_{tt}(s,t)-b_{ss}(s,t)=(v^\psi(s),\ddot u^f(t))-(\ddot
v^\psi(s),u^f(t))\overset{(\ref{Eq 1}),\,(\ref{Eq 1*})}=\\
& =-(v^\psi(s),{L_0^*} u^f(t))+(L
v^\psi(s)-\psi(s),u^f(t))\overset{\text{by}\,\,L\subset{L_0^*}}=\\
& =-(v^\psi(s),{L_0^*} u^f(t))+({L_0^*} v^\psi(s),u^f(t))
-(\psi(s),u^f(t))\overset{(\ref{Eq Green})}=\\
& =(\Gamma_1 v^\psi(s),\Gamma_2 u^f(t))-(\Gamma_2
v^\psi(s),\Gamma_1 u^f(t))-(\psi(s),u^f(t))\overset{(\ref{Eq
3}),\,(\ref{Eq Gamma1 v^psi=0}))}=\\
& =-(\Gamma_2 v^\psi(s),f(t))-(\psi(s),u^f(t))=:F(s,t)\qquad {\rm
in}\,\,\,\mathbb R_+\times\mathbb R_+.
\end{align*}
In the mean time, (\ref{Eq beta 2}) provides the zero Cauchy data:
$b\big|_{t=0}=b_t\big|_{t=0}=0$ on the bottom
$[s-t,s+t]\subset\overline{\mathbb R_+}$ of the cone $C_{s,t}$.
Integrating by D'Alembert formula, we get
$$
b(s,t)=-{1\over 2}\int_{C_{s,t}}F(\xi,\eta)\,d\xi\,d\eta
$$
which is (\ref{Eq DAlembert Cone C}).
\end{proof}

Now, fix $(s,t)$ provided $C_{s,t}\subset C_{{\sigma\over
2},{\sigma\over 2}}$. By this choice, in the cone $C_{s,t}$ we
have
\begin{equation}\label{Eq aux 44}
\Gamma_2 v^\psi(\xi)\big|_{\xi\leqslant\sigma}\overset{(\ref{Eq
(Gamma2 psi=0)})}=0
\end{equation}
and $(\psi(\xi),u^f(\eta))=0$, the latter being valid in view of
$u^f(\eta)\in {\mathscr U}^{\eta}\subset{\mathscr U}^{\sigma\over
2}\subset{\mathscr U}^\sigma$, whereas $\psi(\xi)$ is orthogonal
to ${\mathscr U}^\sigma$. Thus, both summands under integral in
(\ref{Eq DAlembert Cone C}) vanish in the cone and we get
$(v^\psi(s),u^f(t))=0$. Since $f\in{\mathscr M}$ is arbitrary, the
last equality means that $v^\psi(s)\bot {\mathscr U}^t$ holds.
Keeping $s$ fixed and varying $t\in[0,\sigma-s]$ (until $(s,t)\in
C_{{\sigma\over 2},{\sigma\over 2}}$ holds), we get $v^\psi(s)\bot
{\mathscr U}^{\sigma-s}$. Varying $s$ in the admissible segment
$[0,{{\sigma}\over 2}]$, we arrive at
\begin{equation}\label{Eq aux 3 sigma/2 first}
v^\psi(s)\in\overline{{\mathscr U}}\ominus\overline{{\mathscr
U}^{\sigma-s}},\qquad 0\leqslant s\leqslant {{\sigma}\over 2}\,.
\end{equation}

\noindent{\bf Step 2.}\,\,\,In the above considerations, to extend
the segment to $[0,\sigma]$ is not possible since for $s<t$ the
bottom $[s-t,s+t]$ of the cone $C_{t,s}$ does not fit in
$\overline{\mathbb R_+}$. Therefore, we change the cone for
$$
C'_{s,t}:=\{(\xi,\eta)\in \overline{\mathbb
R^2_+}\,|\,\,0\leqslant \xi\leqslant
s,\,\,\xi-s+t\leqslant\eta\leqslant -\xi+s+t\}\,.
$$

Fix $(s,t)$ provided $C'_{s,t}\subset C'_{{\sigma\over
2},{\sigma\over 2}}$. Repeating the same calculations as on Step
1, we arrive at the Cauchy problem for the string equation
\begin{align*}
& b_{tt}-b_{ss}=F(s,t), \qquad 0<s<t;\\
& b\big|_{s=0}=b_s|_{s=0}\overset{(\ref{Eq beta 2})}=0,\qquad
t\geqslant 0
\end{align*}
for the same $b$ and $F$ as before. Integrating by D'Alembert, we
get
\begin{equation*}
(v^\psi(s),u^f(t))=-{1\over 2}\int_{C'_{s,t}}\left[(\Gamma_2
v^\psi(\xi),f(\eta))+(\psi(\xi),u^f(\eta))\right]\,d\xi\,d\eta,\quad
0\leqslant s\leqslant t.
\end{equation*}
By (\ref{Eq aux 44}) and orthogonality $(\psi(\xi),u^f(\eta))=0$
for all $\xi<\sigma$, the summands in the integral vanish and we
obtain $(u^f(t),v^\psi(s))=0$ for $(s,t)\in C'_{{\sigma\over
2},{\sigma\over 2}}$. Keeping ${\sigma\over 2}<t<\sigma$ fixed and
extending $s$ from 0 to $\sigma-t$, we conclude that
$(v^\psi(\sigma-t),u^f(t))=0$ holds for $0\leqslant
t\leqslant\sigma$. Since $f\in {\mathscr M}$ is arbitrary, the
latter obeys $v^\psi(t)\bot\overline{{\mathscr U}^{\sigma-t}}$,
i.e.,
\begin{equation}\label{Eq aux 3 sigma/2 second}
v^\psi(t)\in\overline{{\mathscr U}}\ominus\overline{{\mathscr
U}^{\sigma-t}},\qquad {\sigma\over 2}\leqslant t\leqslant
{{\sigma}}\,.
\end{equation}
\smallskip

Putting (\ref{Eq aux 3 sigma/2 first}) and (\ref{Eq aux 3 sigma/2
second}) together, we obtain
\begin{equation}\label{Eq sigma-t+sigma}
v^\psi(t)\in\overline{{\mathscr U}}\ominus\overline{{\mathscr
U}^{\sigma-t}},\qquad 0\leqslant t\leqslant \sigma
\end{equation}
and establish the first part of the Theorem for smooth $\psi$.
Approximating $\psi\in\Psi$ with smooth sources and using the
continuity of the map $\psi\big|_{[0,t]}\to v^\psi(t)$, we
complete the proof of the first part.

It remains to prove the relation $v^\psi(t)\in\overline{{\mathscr
U}^{\tau+t}}$.
\smallskip

\noindent{\bf Step 3.}\,\,\,Let $I^t:{\mathscr H}\to{\mathscr
H}$,\,\,$I^ty:=v^y(t)$ be a map that resolves problem (\ref{Eq
beta 1'})--(\ref{Eq beta 2'}). By (\ref{Eq v^delta y repres
general}) we have $I^t=L^{-{1\over 2}}\sin [tL^{{1\over 2}}]$, so
that $I^t$ is a bounded self-adjoint operator in ${\mathscr H}$.
Let us establish the following.
\begin{Lemma}\label{L I tU}
The relation
\begin{equation}\label{Eq I tU}
I^t\overline{{\mathscr U}^\tau}\subset\overline{{\mathscr
U}^{\tau+t}},\qquad \tau>0,\,\,t>0
\end{equation}
holds.
\end{Lemma}
\begin{proof} Take an $f\in\mathscr M$ and $\theta>0$.
By virtue (\ref{Eq sigma-t+sigma}), for a source $\psi\in\Psi$
acting from $\overline{\mathscr U}\ominus\overline{\mathscr
U^{\tau+\theta}}$ one has
$$
(v^\psi(\tau+\theta-t), u^f(t))=0,\qquad 0\leqslant t\leqslant
\theta+\tau.
$$
Putting $t=\tau$, we have
\begin{align*}
& 0=(v^\psi(\theta), u^f(\tau))\overset{(\ref{Eq v^psi repres
general})}=(\int_0^\theta L^{-{1\over 2}}\sin[(\theta-s)L^{{1\over
2}}]\psi(s)\,ds, u^f(\theta)\,)=\\
& = \int_0^\theta(\psi(s),L^{-{1\over 2}}\sin[(\theta-s)L^{{1\over
2}}]u^f(\tau)\,)\,ds.
\end{align*}
By arbitrariness of $\psi$, the latter leads to
\begin{equation*}
L^{-{1\over 2}}\sin[(\theta-s)L^{{1\over
2}}]u^f(\tau)\,\bot\,\overline{\mathscr
U}\ominus\overline{\mathscr U^{\tau+\theta}}, \qquad 0\leqslant
s\leqslant \theta,
\end{equation*}
that is equivalent to $L^{-{1\over 2}}\sin[(\theta-s)L^{{1\over
2}}]u^f(\tau)\in\overline{\mathscr U^{\tau+\theta}}$. Putting
$s=0$ and $\theta=t$, we get $I^t
u^f(\tau)\subset\overline{\mathscr U^{\tau+t}}$. Since the waves
$u^f(\tau)$ are dense in $\overline{\mathscr U^{\tau}}$, we arrive
at (\ref{Eq I tU}).
\end{proof}
\smallskip

Let a source $\psi$ satisfy $\psi(t)\in\overline{{\mathscr
U}^\tau}$ for all $t\geqslant 0$. Representing
$$
v^\psi(t)\overset{(\ref{Eq v^psi repres general})}=\int_0^t
\left[I^{t-s}\,\psi(s)\right]\,ds,\qquad t>0,
$$
and taking into account $I^{t-s}\psi(s)\overset{(\ref{Eq I
tU})}\in \overline{{\mathscr U}^{\tau+t}}$ for all $s\leqslant t$,
we conclude that $v^\psi(t)\in\overline{{\mathscr U}^{\tau+t}}$ is
valid and, thus, prove Theorem \ref{T1}.
\end{proof}

The crucial observation that interior products of waves satisfy
the string equation $\square\, b=F$ is due to
A.S.Blagoveshchenskii. It was it that made possible to develop a
version of the BC-method for solving dynamical (time-domain)
inverse problems \cite{Bel DAN'87}. Simple but productive trick,
which consists in changing the roles of the spatial variable $s$
and time $t$ (with the replacement of the cone $C_{s,t}$ by the
cone $C'_{s,t}$) was also invented by Aleksandr Sergeevich
\cite{BelBlag,Blag,Blag2}.

\section{Wave parts of systems and operators}

\subsubsection*{Systems $\beta_{\mathscr D}$ and $\beta_{\mathscr U}$}
\noindent$\bullet$\,\,\, Recall that system $\beta$ is of the form
\begin{align*}
& \ddot v+Lv = \psi  && {\rm in}\,\,\,{{\mathscr H}}, \,\,\,t>0;\\
& v|_{t=0}=\dot v|_{t=0}=0 && {\rm in}\,\,\,{{\mathscr H}},
\end{align*}
and its trajectory is
\begin{equation*}
v^\psi(t)=L^{-{1\over 2}}\int_0^t\sin[(t-s)L^{{1\over
2}}]\,\psi(s)\,ds,\qquad t>0.
\end{equation*}
As a partial case, we deal with the sources $\psi=\delta(t)y$ and
the corresponding trajectory $v=v^{\delta y}=:v^y$ of the system
\begin{align*}
& \ddot v+Lv = 0  && {\rm in}\,\,\,{{\mathscr H}}, \,\,\,t>0;\\
& v|_{t=0}=0,\,\,\, \dot v|_{t=0}=y && {\rm in}\,\,\,{{\mathscr
H}},
\end{align*}
which is represented by
\begin{equation*}
v^{y}(t)=L^{-{1\over 2}}\sin[(t-s)L^{{1\over 2}}]\,y,\qquad t>0.
\end{equation*}

\noindent$\bullet$\,\,\, Recall the decomposition ${\mathscr
H}=\overline{{\mathscr U}}\oplus{\mathscr D}$, where ${\mathscr
U}:={\rm span\,}\{{\mathscr U}^t\,|\,\,t>0\}$.
\begin{Lemma}\label{L evolution in D}
If $\psi(t)\in{\mathscr D}$\, ($\psi(t)\in{\mathscr U}$) holds for
$t>0$ then $v^\psi(t)\in{\mathscr D}$\, ($v^\psi(t)\in{\mathscr
U}$) is valid for all $t>0$. If $y\in{\mathscr
D}\,\,(y\in{\mathscr U}$) holds then $v^y(t)\in{\mathscr D}$\,
($v^y(t)\in{\mathscr U}$) is valid for all $t>0$.
\end{Lemma}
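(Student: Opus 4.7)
The plan is to reduce everything to the single operator $I^t=L^{-1/2}\sin(tL^{1/2})$, for which Lemma \ref{L I tU} already gives the key invariance, and then pass from $\overline{\mathscr U}$ to $\mathscr D$ by self-adjointness.

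First I would record the following immediate consequence of Lemma \ref{L I tU}: taking the closure of the union over $\tau>0$ of the inclusions $I^t\overline{\mathscr U^\tau}\subset\overline{\mathscr U^{\tau+t}}\subset\overline{\mathscr U}$, together with the boundedness of $I^t$, yields $I^t\overline{\mathscr U}\subset\overline{\mathscr U}$ for every $t>0$. This handles the $\mathscr U$-part for the instantaneous-source system: if $y\in\overline{\mathscr U}$, then $v^y(t)=I^ty\in\overline{\mathscr U}$.

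Next, the $\mathscr D$-part follows by a standard self-adjointness argument. Since $I^t=L^{-1/2}\sin(tL^{1/2})$ is a bounded self-adjoint operator on $\mathscr H$ and the closed subspace $\overline{\mathscr U}$ is invariant under $I^t$, its orthogonal complement $\mathscr D=\mathscr H\ominus\overline{\mathscr U}$ is also invariant:
\begin{equation*}
(I^t d,u)=(d,I^t u)=0\qquad\text{for all }d\in\mathscr D,\ u\in\overline{\mathscr U},
\end{equation*}
so $I^t d\in\mathscr D$. In particular, $y\in\mathscr D$ gives $v^y(t)=I^t y\in\mathscr D$.

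Finally, for the source-driven system I would use the representation
\begin{equation*}
v^\psi(t)=\int_0^t I^{t-s}\psi(s)\,ds,\qquad t>0,
\end{equation*}
which is valid for $\psi\in\Psi$ (and is the definition for smooth $\psi\in\mathscr N$, extended by density to general $\psi$). If $\psi(s)\in\mathscr D$ for all $s$, then by the previous step $I^{t-s}\psi(s)\in\mathscr D$ for each $s\in[0,t]$; since $\mathscr D$ is closed, the Bochner integral lies in $\mathscr D$, i.e.\ $v^\psi(t)\in\mathscr D$. The same argument, using the invariance $I^{t-s}\overline{\mathscr U}\subset\overline{\mathscr U}$, shows $v^\psi(t)\in\overline{\mathscr U}$ whenever $\psi(s)\in\overline{\mathscr U}$ for all $s$ (which I take to be the intended reading of ``$\psi(t)\in\mathscr U$'').

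I do not anticipate a genuine obstacle here; the only subtlety is that one must pass through the closure $\overline{\mathscr U}$ (since $\mathscr U$ itself is only a linear span, not closed) and appeal to the boundedness of $I^t$ to move the closure inside. Everything else is a direct application of Lemma \ref{L I tU} and the self-adjointness of $I^t$.
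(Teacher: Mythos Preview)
Your argument is correct. The route differs from the paper's, though, and the comparison is worth noting.

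The paper proves the $\mathscr D$--case \emph{first} and from scratch: for a smooth source $\psi$ with $\psi(t)\perp\mathscr U$, relation (\ref{Eq (f,Gamma2 v psi)}) forces $\Gamma_2 v^\psi\equiv 0$, and then the D'Alembert cone identity (\ref{Eq DAlembert Cone C}) gives $(u^f(t),v^\psi(s))=0$ for all $s,t$, i.e.\ $v^\psi(s)\perp\mathscr U$. Only afterwards does the paper obtain the $\mathscr U$--case, by the same self-adjointness trick you use (pairing $v^\psi$ against $v^y$ for $y\in\mathscr D$). You reverse the order: you invoke Lemma \ref{L I tU} to get $I^t\overline{\mathscr U}\subset\overline{\mathscr U}$ immediately, pass to $\mathscr D$ by self-adjointness of $I^t$, and then handle the source-driven system via the integral representation. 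Your approach is more economical, since Lemma \ref{L I tU} has already absorbed the D'Alembert work inside the proof of Theorem \ref{T1}; the paper's proof is more self-contained in that it reproves what it needs from the basic duality relations rather than citing a lemma buried in another proof. Both are sound; your reading of ``$\psi(t)\in\mathscr U$'' as $\psi(t)\in\overline{\mathscr U}$ is the intended one.
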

\begin{proof} $\bf 1.$\,\,\,Let $\psi(t)\in{\mathscr D}\cap
\mathscr N$,\,\,$t>0$. Since $\psi(t)\bot{\mathscr U}$ for $t>0$,
relation (\ref{Eq (f,Gamma2 v psi)}) easily imply $\Gamma_2
v^\psi\big|_{t>0}=0$. Hence, by (\ref{Eq DAlembert Cone C}) we
have $(u^f(t),v^\psi(s))=0$ for all $s,t$. Therefore,
$v^\psi(s)\bot{\mathscr U}$ holds for all $s>0$. By approximating,
if necessary, the source $\psi\in\mathscr D$ with the sources of
the class $\mathscr N$, one cancels the restriction
$\psi(t)\in{\mathscr D}\cap \mathscr N$.

As one can easily verify, the same is true for the sources
$\psi=\delta(t) y$ with $y\in{\mathscr D}$: we have
$v^y(t)\in{\mathscr D}$ for all $t>0$.
\smallskip

$\bf 2.$\,\,\,Let $\psi(t)\in{\mathscr U}$,\,\,$t>0$. Fix an
arbitrary $y\in{\mathscr D}$. By (\ref{Eq v^psi repres general})
we have
\begin{align*}
& (v^\psi(t),y)= \int_0^t\left(L^{-{1\over 2}}\sin[(t-s)L^{{1\over
2}}]\,\psi(s),y\right)\,ds=\\
& =\int_0^t\left(\psi(s),L^{-{1\over 2}}\sin[(t-s)L^{{1\over
2}}]\,y\right)\,ds=\int_0^t\left(\psi(s),v^y(t-s)\right)\,ds=\\
& =0,\qquad t>0,
\end{align*}
the latter equality being valid due to relation $v^y(t)\in
{\mathscr D}$ proved above. Thus, we arrive at $v^\psi(t)\bot y$,
i.e., $v^\psi(t)\in{\mathscr U},\,\,\,t>0$.
\end{proof}

\noindent$\bullet$\,\,\, As a result, we conclude that system
$\beta$ evolves either in the subspace ${\mathscr D}$ or in the
subspace ${\mathscr U}$, depending on the source $\psi$ acting
from ${\mathscr D}$ or ${\mathscr U}$ respectively. It means that
$\beta$ splits in two independent (noninteracting) systems
$\beta_{\mathscr D}$ and $\beta_{\mathscr U}$, the second system
sharing the common evolution space with DSBC $\alpha$. If $\alpha$
is controllable, i.e., $\overline{\mathscr U}={\mathscr H}$ holds,
then ${\mathscr D}=\{0\}$ and system $\beta_{\mathscr D}$ is
absent \footnote{It is the case in the Example.}. Recall that the
latter occurs if and only if operator $L_0$, which determines all
systems under consideration, is completely non-selfadjoint
\cite{BD_DSBC}. One can claim that system $\beta_{\mathscr D}$ is
a part of system $\beta$ uncontrollable (unobservable) from
boundary. This picture is in full agreement with the general
systems theory: see \cite{KFA}, Chapter 10.

\subsubsection*{Space and wave parts of ${L_0^*}$}

\noindent$\bullet$\,\,\, Fix $T>0$ and assume that operator
${L_0^*}$ has a part in $\overline{{\mathscr U}^T}$. Recall that
this means $ \overline{\overline{{\mathscr U}^T}\cap{\rm
Dom\,}{L_0^*}}=\overline{{\mathscr U}^T}$ and
${L_0^*}\,[\overline{{\mathscr U}^T}\cap{\rm
Dom\,}{L_0^*}]\subset\overline{{\mathscr U}^T}$. Simplifying the
notation, we denote the part ${L_0^*}_{\,\overline{{\mathscr
U}^T}}$ by ${L_0^*}^T$. This part is a closable operator in
$\overline{{\mathscr U}^T}$ and we preserve the same notation
${L_0^*}^T$ for its closure. We say ${L_0^*}^T$ to be {\it a space
part} of ${L_0^*}$ in $\overline{{\mathscr U}^T}$.

In the mean time, the lineal set ${\mathscr U}^T$ of smooth waves
is dense in $\overline{{\mathscr U}^T}$ and invariant:
${L_0^*}{\mathscr U}^T={\mathscr U}^T$ holds (see (\ref{Eq invar U
tau})). Therefore the operator
$$
{L_0^*}^T_u:\overline{{\mathscr U}^T}\to\overline{{\mathscr
U}^T},\,\,\,{\rm Dom\,}{L_0^*}^T_u={\mathscr
U}^T,\,\,\,{L_0^*}^T_u\, y:={L_0^*} y
$$
is well defined, densely defined and closable in
$\overline{{\mathscr U}^T}$. We preserve the same notation
${L_0^*}^T_u$ for its closure and call it {\it a wave part} of
${L_0^*}$ in $\overline{{\mathscr U}^T}$.

As is evident, ${L_0^*}^T_u\subset{L_0^*}^T$ holds but something
more can be said about relations between these parts. In the
following Lemma, by {\it isomorphism} we mean an injective,
surjective, bounded, and boundedly invertible operator. Recall
that ${\mathscr K}:={\rm Ker\,}{L_0^*}$. Denote $\mathscr
F^T:=L_2([0,T];\mathscr K)$ and $\mathscr
M^T:=\{f\big|_{[0,T]}\,|\,\,f\in\mathscr M\}$.
\begin{Lemma}\label{L space and wave parts}
Assume that $W^T$ is an isomorphism from ${\mathscr F^T}$ to
$\overline{{\mathscr U}^T}$ and ${\mathscr
K}\cap\overline{{\mathscr U}^T}=\{0\}$ holds. Then the space and
wave parts coincide: ${L_0^*}^T={L_0^*}^T_u$ holds.
\end{Lemma}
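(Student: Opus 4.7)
The easy inclusion ${L_0^*}^T_u\subset {L_0^*}^T$ is immediate from the definitions: $\mathscr U^T\subset \overline{\mathscr U^T}\cap {\rm Dom\,}{L_0^*}$ and ${L_0^*}$ acts the same way on both sets, so taking graph-norm closures preserves the inclusion. For the reverse direction it suffices to show that every element $y$ of the defining core $\overline{\mathscr U^T}\cap {\rm Dom\,}{L_0^*}$ of ${L_0^*}^T$ lies in ${\rm Dom\,}{L_0^*}^T_u$ with matching action ${L_0^*} y$; closing both operators in graph norm then yields ${L_0^*}^T\subset {L_0^*}^T_u$.

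Fix $y\in \overline{\mathscr U^T}\cap {\rm Dom\,}{L_0^*}$. My first step is to absorb the boundary value $h:=\Gamma_1 y\in\mathscr K$ into a single smooth wave. Choose once and for all a cutoff $\eta\in C^\infty([0,T])$ vanishing near $0$ with $\eta(T)=1$ and set $\phi(t):=\eta(t)h\in\mathscr M^T$; the smooth wave $w:=u^\phi(T)\in\mathscr U^T$ satisfies $\Gamma_1 w=\phi(T)=h$, so $\tilde y:=y-w$ belongs to $\overline{\mathscr U^T}\cap {\rm Ker\,}\Gamma_1\cap {\rm Dom\,}{L_0^*}=\overline{\mathscr U^T}\cap {\rm Dom\,}L$ by (\ref{Eq L0,L}). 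Since $w\in\mathscr U^T$ is itself a smooth wave, an approximation of $\tilde y$ in graph norm by smooth waves automatically yields one for $y$.

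Next I would pull the problem down to $\mathscr F^T$ via the isomorphism. Set $\tilde f:=(W^T)^{-1}\tilde y\in\mathscr F^T$ and $\tilde g:=(W^T)^{-1}({L_0^*}\tilde y)\in\mathscr F^T$, the latter being well defined because the space part carries $\overline{\mathscr U^T}$ into itself. Testing against arbitrary $\phi\in\mathscr M$ with ${\rm supp\,}\phi\subset(0,T)$ and applying Green's formula (\ref{Eq Green}), the surface terms vanish because $\Gamma_1\tilde y=0$ and $\Gamma_1 u^\phi(T)=\phi(T)=0$, leaving $({L_0^*}\tilde y,u^\phi(T))_{\mathscr H}=-(\tilde y,W^T\ddot\phi)_{\mathscr H}$. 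Using the iso to transfer this $\mathscr H$-level identity back to $\mathscr F^T$, I would then argue that $\tilde f\in H^2([0,T];\mathscr K)$ with $\ddot{\tilde f}=-\tilde g$ and $\tilde f(0)=\dot{\tilde f}(0)=0$; the boundary conditions at $0$ are inherited from $\mathscr M^T$ under $L_2$-limits. Standard mollification then furnishes $\tilde f_n\in\mathscr M^T$ with $\tilde f_n\to\tilde f$ and $\ddot{\tilde f}_n\to -\tilde g$ in $\mathscr F^T$; applying $W^T$ gives $u^{\tilde f_n}(T)\to \tilde y$ together with ${L_0^*} u^{\tilde f_n}(T)=-W^T\ddot{\tilde f}_n\to W^T\tilde g={L_0^*}\tilde y$ in $\mathscr H$, placing $\tilde y$ in ${\rm Dom\,}{L_0^*}^T_u$ with the correct action.

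The principal obstacle is extracting the clean equation $\ddot{\tilde f}=-\tilde g$ from the $\mathscr H$-level identity: dualising through $W^T$ naturally produces a distributional equation only for $G\tilde f$ with $G:=(W^T)^*W^T$, and converting this into the cleaner statement about $\tilde f$ itself requires the full strength of the isomorphism assumption (invertibility of $G$ on $\mathscr F^T$). The complementary hypothesis $\mathscr K\cap \overline{\mathscr U^T}=\{0\}$ is used precisely to rule out a nonzero $k\in\mathscr K\cap \overline{\mathscr U^T}$, which would lie in ${\rm Dom\,}{L_0^*}^T$ with ${L_0^*}^T k=0$ yet escape the graph-closure of $\mathscr U^T$ (a short integration argument on $\ddot f_n\to 0$ shows this), thereby forcing ${L_0^*}^T\neq {L_0^*}^T_u$; so both hypotheses are essential.
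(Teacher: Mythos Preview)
Your argument has a genuine gap at the step you yourself flag as the ``principal obstacle'', and the invertibility of $G=(W^T)^*W^T$ does not close it. Testing against $\phi\in C_c^\infty((0,T);\mathscr K)$ yields $(G\tilde g,\phi)_{\mathscr F^T}=-(G\tilde f,\ddot\phi)_{\mathscr F^T}$, i.e.\ $(G\tilde f)''=-G\tilde g$ distributionally. But $G$ is a nonlocal-in-time integral operator on $\mathscr F^T$ and does not commute with $\partial_t$; bounded invertibility of $G$ gives you no way to strip it off and conclude $\ddot{\tilde f}=-\tilde g$. In fact there is no a priori reason for $\tilde f=(W^T)^{-1}\tilde y$ to lie in $H^2([0,T];\mathscr K)$ at all: $W^T$ is an isomorphism between $L_2$-spaces, and its inverse carries no smoothing on elements of ${\rm Dom\,}L\cap\overline{\mathscr U^T}$. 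Note also that your main argument, if it worked, would never invoke $\mathscr K\cap\overline{\mathscr U^T}=\{0\}$; you only cite it as a necessity remark, which is a sign the sufficiency proof is incomplete.

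The paper sidesteps the obstacle by approximating on the image side and \emph{integrating} rather than differentiating. Starting from an arbitrary $y\in\overline{\mathscr U^T}\cap{\rm Dom\,}L_0^*$ (no boundary-value reduction is performed), one picks $g_n\in\mathscr M^T$ with $u^{g_n}(T)\to L_0^*y$; the isomorphism forces $g_n\to g$ in $\mathscr F^T$. Writing $g_n=-\ddot f_n$ with $f_n(t)=-\int_0^t\!\int_0^s g_n$ (a bounded operation on $\mathscr F^T$), one gets $f_n\to f$ and hence $u^{f_n}(T)\to u^f(T)$, while $L_0^*u^{f_n}(T)=u^{g_n}(T)\to L_0^*y$. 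Thus $(u^f(T),L_0^*y)\in{\rm graph\,}{L_0^*}^T_u$. Now both $(y,L_0^*y)$ and $(u^f(T),L_0^*y)$ lie in the closed graph of ${L_0^*}^T\subset L_0^*$, so $y-u^f(T)\in\mathscr K\cap\overline{\mathscr U^T}=\{0\}$, whence $y=u^f(T)$ and the graphs coincide. The kernel hypothesis is thus used constructively in the proof, precisely to identify the limit $u^f(T)$ with the given $y$.
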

\begin{proof}
Choose a pair $(y,{L_0^*}^T y)=(y,{L_0^*} y)\in{\rm
graph\,}{L_0^*}^T$. In view of (\ref{Eq M}) and (\ref{Eq invar U
tau}),  one can find a sequence of controls $g_n \in {\mathscr
M}^T$ provided $u^{g_n}(T)\to {L_0^*} y$ in $\overline{{\mathscr
U}^T}$. By isomorphism of $W^T$, this sequence has to converge:
$g_n\to g$ in ${\mathscr F^T}$. Representing uniquely $g_n=-\ddot
f_n,\,\,\, g=-\ddot f$ with $f_n\in{\mathscr M}^T$, we have the
convergence $f_n\to f$ in ${\mathscr F^T}$, which implies
$u^{f_n}(T)\to u^f(T)$ in $\overline{{\mathscr U}^T}$. Along with
the latter convergence, one has ${L_0^*}
u^{f_n}(T)\overset{(\ref{Eq steady state+})}=-\ddot
u^{f_n}(T)=u^{-\ddot f_n}(T)=u^{g_n}(T)\to {L_0^*} y$. As a
result, we conclude that $(u^f(T),{L_0^*} y)\in{\rm
graph\,}{L_0^*}^T_u$ holds.

In the mean time, ${L_0^*}^T_u\subset{L_0^*}^T$ obeys ${\rm
graph\,}{L_0^*}^T_u\subset{\rm graph\,}{L_0^*}^T$. Hence, both
pairs $(y,{L_0^*} y)$ and $(u^f(T),{L_0^*} y)$ belong to ${\rm
graph\,}{L_0^*}^T$. The latter follows to $(y-u^f(T),0)\in{\rm
graph\,}{L_0^*}^T$, i.e., $y-u^f(T)\in {\mathscr K}$. In view of
${\mathscr K}\cap\overline{{\mathscr U}^T}=\{0\}$ we arrive at
$y=u^f(T)$ and conclude that the graphs of the space and wave
parts of ${L_0^*}$ in $\overline{{\mathscr U}^T}$ coincide, i.e.,
${L_0^*}^T={L_0^*}^T_u$ does hold.
\end{proof}
\smallskip

\noindent$\bullet$\,\,\, The assumption on $W^T$ to be an
isomorphism is rather restrictive: for instance, it is invalid in
the Example. However, analyzing the proof of Lemma \ref{L space
and wave parts}, it is easy to remark that such an assumption can
be relaxed as follows. It suffices to require the convergence of
${L_0^*} u^{f_n}(T)$ to imply the convergence of $u^{f_n}(T)$ in
$\overline{{\mathscr U}^T}$, whereas the convergence of $f_n$ in
${\mathscr F^T}$ is not necessary. As can be shown, the latter
holds in the Example for times $T<T_*$.

In this regard, it is worth noting that the convergence of
${L_0^*} u^{f_n}(T)$ implies the convergence of the summands
$u^{f_n}_L(T)\in{\rm Dom\,} L$ in representation (\ref{Eq u^f+}).
This reflects a general fact: in (\ref{Eq Vishik Decomp
components}), if ${L_0^*} y_n$ converges then $y'_n=L^{-1}{L_0^*}
y_n$ also converges.

The counterexamples of ${L_0^*}^T\not={L_0^*}^T_u$ are not known
and a hope for the equality with no assumptions is still alive.

\subsubsection*{Completeness of waves}

\noindent$\bullet$\,\,\, In system $\beta$ one can introduce a
'source--state' map ${\cal I}^t\psi:=v^\psi(t),\,\,t>0$ for
$\psi\in L^{\rm loc}_2(\mathbb R_+;{\mathscr H})$ \footnote{It is
used in \cite{B JOT,BSim_3 Mat Sbor} and called an {\it isotony}}.
Fix a subspace ${\mathscr A}\subset{\mathscr H}$ and denote by
$\Psi_{\mathscr A}:=  L^{\rm loc}_2(\mathbb R_+;{\mathscr A})$ the
space of sources acting from ${\mathscr A}$. In this notation, the
statement of Theorem \ref{T1} takes the form
$$
{\cal I}^t\Psi_{\overline{{\mathscr
U}^\tau}\ominus\overline{{\mathscr
U}^\sigma}}\subset\overline{{\mathscr
U}^{\tau+t}}\ominus\overline{{\mathscr U}^{\sigma-t}},\qquad
0<\sigma<\tau, \,\,t>0.
$$
In the mean time, in the Example, as well as in many other
applications, a stronger relation occurs: not embedding but
equality holds. It is interpreted as a completeness of waves in
domains, which they fill up. In the abstract case, by analogy with
applications, one may speak about completeness of waves in the
filled subspaces. Below we show a result of this kind under some
additional assumption.
\smallskip

\noindent$\bullet$\,\,\, Let $P^{\epsilon}$ be the projection in
$\overline{{\mathscr U}}$ onto $\overline{{\mathscr
U}^{\epsilon}}$; denote $P^{\epsilon}_\bot:=\mathbb
I-P^{\epsilon}$. Assume that there is a continuous (in norm)
family of the bounded operators
$N^{\epsilon},\,\,\,0\leqslant{\epsilon}\leqslant{\epsilon}_*$
such that
\begin{equation}\label{Eq neutrol 1}
N^0=\mathbb I;\qquad N^{\epsilon} y\in {\rm Dom\,}
L,\,\,\,P^{\epsilon}_\bot N^{\epsilon} y=P^{\epsilon}_\bot
y,\qquad y\in{\rm Dom\,} L^*_0
\end{equation}
holds. Note that, by (\ref{Eq neutrol 1}), one has $y-N^{\epsilon}
y\in \overline{{\mathscr U}^{\epsilon}}$.

In the Example, in capacity of $N^{\epsilon}$ one can take the
multiplication by a smooth function $\chi^{\epsilon}$ provided
$0\leqslant\chi^{\epsilon}(\cdot)\leqslant
1,\,\,\,\chi^{\epsilon}\big|_{\Omega\setminus\Omega^{\epsilon}}=1,\,\,\,\chi^{\epsilon}\big|_\Gamma=0$.
Parameter ${\epsilon}_*$ is chosen to provide the interior
boundary of the subdomain $\Omega^{{\epsilon}_*}$ to be smooth. By
analogy to $\chi^{\epsilon}$ we call $N^{\epsilon}$ a {\it
neutralizer}.
\begin{Lemma}\label{L completeness}
Let operator  ${L_0}$ be such that the neutralizers
$N^{\epsilon},\,\,0\leqslant{\epsilon}\leqslant{\epsilon}_*$ do
exist and let operator ${L_0^*}$ have a space part
${L_0^*}^{\epsilon}$ in each subspace $\overline{{\mathscr
U}^{\epsilon}}$. Then the relation
\begin{equation}\label{Eq completeness}
\overline{{\cal I}^t\Psi_{\overline{{\mathscr
U}^\tau}}}=\overline{{\mathscr U}^{\tau+t}},\qquad \tau>0, \,\,t>0
\end{equation}
is valid.
\end{Lemma}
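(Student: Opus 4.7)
The plan is to prove both inclusions. The easy direction $\overline{\mathcal I^t\Psi_{\overline{\mathscr U^\tau}}}\subset\overline{\mathscr U^{\tau+t}}$ follows from Lemma~\ref{L I tU} applied pointwise in the Duhamel formula $v^\psi(t)=\int_0^t I^{t-s}\psi(s)\,ds$: for $\psi(s)\in\overline{\mathscr U^\tau}$ one has $I^{t-s}\psi(s)\in\overline{\mathscr U^{\tau+(t-s)}}\subset\overline{\mathscr U^{\tau+t}}$, and integration stays in the closed subspace. For the reverse inclusion, by density of $\mathscr U^{\tau+t}$ in $\overline{\mathscr U^{\tau+t}}$ it suffices to show $u^g(\tau+t)\in\overline{\mathcal I^t\Psi_{\overline{\mathscr U^\tau}}}$ for every $g\in\mathscr M$.

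Fix such a $g$ and $\epsilon\in(0,\min\{\tau,\epsilon_*\}]$, and set $\tilde w(s):=N^\epsilon u^g(s)$. The neutralizer sends $\tilde w(s)$ into ${\rm Dom}\,L$, so differentiating and using $L\subset L_0^*$ together with $\ddot u^g=-L_0^* u^g$ gives
\[
\ddot{\tilde w}(s)+L\tilde w(s) \;=\; L_0^*\bigl(N^\epsilon u^g(s)-u^g(s)\bigr)+(\mathbb I-N^\epsilon)L_0^* u^g(s)\;=:\;\psi(s).
\]
The crucial point is that both summands of $\psi(s)$ belong to $\overline{\mathscr U^\epsilon}$: the first because $N^\epsilon u^g(s)-u^g(s)\in\overline{\mathscr U^\epsilon}\cap{\rm Dom}\,L_0^*$, on which $L_0^*$ acts through its space part; the second because $L_0^* u^g(s)\in\mathscr U^s\subset{\rm Dom}\,L_0^*$, on which the neutralizer identity $P^\epsilon_\bot(\mathbb I-N^\epsilon)=0$ applies. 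Since $u^g(0)=\dot u^g(0)=0$, we have $\tilde w=v^\psi$ with $\psi\in\Psi_{\overline{\mathscr U^\epsilon}}\subset\Psi_{\overline{\mathscr U^\tau}}$, so $N^\epsilon u^g(\tau+t)=v^\psi(\tau+t)$.

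Next, split $\psi$ into its restrictions to $[0,\tau]$ and $[\tau,\tau+t]$. The time shift $\tilde\psi(r):=\psi(r+\tau)$ expresses the second contribution as $v^{\tilde\psi}(t)\in\mathcal I^t\Psi_{\overline{\mathscr U^\tau}}$ directly. The first contribution evolves freely after $s=\tau$, so
\[
v^{\psi|_{[0,\tau]}}(\tau+t)\;=\;\cos(tL^{1/2})a+L^{-1/2}\sin(tL^{1/2})b,
\]
with $a:=N^\epsilon u^g(\tau)$ and $b:=N^\epsilon u^{\dot g}(\tau)$, both in $\overline{\mathscr U^\tau}$ (since $u^g(\tau),u^{\dot g}(\tau)\in\mathscr U^\tau$ and $(N^\epsilon-\mathbb I)$ adds something in $\overline{\mathscr U^\epsilon}\subset\overline{\mathscr U^\tau}$). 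I then realize these two pieces as limits in $\overline{\mathcal I^t\Psi_{\overline{\mathscr U^\tau}}}$ by mollifying the (inadmissible) singular sources $\delta(s)b$ and $\delta'(s)a$: with a smooth mollifier $\eta_\theta$ vanishing at $0$ and supported in $[0,\theta]$, integration by parts inside the Duhamel integral yields $v^{\eta_\theta b}(t)\to L^{-1/2}\sin(tL^{1/2})b$ and $v^{\eta'_\theta a}(t)\to\cos(tL^{1/2})a$ as $\theta\to 0$, the boundary contributions vanishing thanks to $\eta_\theta(0)=\eta_\theta(t)=0$. Both approximating sources lie in $\Psi_{\overline{\mathscr U^\tau}}$, hence both limits lie in $\overline{\mathcal I^t\Psi_{\overline{\mathscr U^\tau}}}$. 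Collecting, $N^\epsilon u^g(\tau+t)\in\overline{\mathcal I^t\Psi_{\overline{\mathscr U^\tau}}}$, and letting $\epsilon\to 0$ with $N^\epsilon\to N^0=\mathbb I$ in operator norm delivers $u^g(\tau+t)$ in the closure. The main obstacle is the source identity $\psi(s)\in\overline{\mathscr U^\epsilon}$ — precisely the spot where the two hypotheses of the lemma, the existence of neutralizers and the existence of a space part of $L_0^*$ in each $\overline{\mathscr U^\epsilon}$, must cooperate.
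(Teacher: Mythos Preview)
Your proof is correct and follows essentially the same route as the paper's sketch: convert $u^g$ to a $\beta$-trajectory via the neutralizer $N^\epsilon$, use the space-part hypothesis to keep the resulting source inside $\overline{\mathscr U^\epsilon}\subset\overline{\mathscr U^\tau}$, handle the Cauchy data at $t=\tau$ by (mollified) $\delta,\dot\delta$ sources, and pass to the limit $\epsilon\to 0$. The only organizational difference is that the paper starts the $\beta$-evolution at $t=\tau$ with the singular Cauchy-data sources already folded into $F^\epsilon$, whereas you run the $\beta$-evolution from $t=0$, split at $t=\tau$, and treat the free-evolution piece by an explicit mollification --- which is in fact the regularization the paper alludes to in its closing remark.
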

\begin{proof}
\,\,\,({\it sketch})\,\,\,Fix\, $0<\tau<T$ and take $f\in{\mathscr
M}^T$. The corresponding waves  $u^f(T)$ constitute a dense set in
$\overline{{\mathscr U}^T}$ by definition of the latter. As is
evident, the projections $P^{\epsilon}_\bot u^f(T)$ are dense in
$\overline{{\mathscr U}^T}\ominus\overline{{\mathscr
U}^{\epsilon}}$. Loosely speaking, the idea of the proof is to
represent $u^f(T)$ as a wave produced by a relevant source $F$,
which acts from the subspace $\overline{{\mathscr U}^\tau}$.

The wave $u^f$ that satisfies (\ref{Eq 1})--(\ref{Eq 3}), is also
determined by the system
\begin{align*}
& \ddot u+{L_0^*} u= \delta \dot u^f(\tau)+\dot\delta u^f(\tau),
&&
\tau<t<T;\\
& u\big|_{t=\tau}=\dot u\big|_{t=\tau}=0;\\
& \Gamma_1 u= f, && \tau\leqslant t\leqslant T,
\end{align*}
where $\delta=\delta(t)$ is the Dirac function.

Taking ${\epsilon}<\tau$ and representing $u^f=N^{\epsilon} u^f+
[u^f-N^{\epsilon} u^f]$, we get the system $\beta$ (with a shifted
time) of the form
\begin{align}
\label{Eq syst complete 1} &  \ddot{N^{\epsilon}
u^f}+L\,N^{\epsilon} u^f=F^{\epsilon}, &&
\tau<t<T;\\
\label{Eq syst complete 2} & {N^{\epsilon}
u^f}\big|_{t=\tau}=\dot{N^{\epsilon} u^f}\big|_{t=\tau}=0 &&
\end{align}
(we use ${L_0^*} N^{\epsilon} u^f=LN^{\epsilon} u^f$) with a
source
$$
F^{\epsilon}(t):=-\left[\frac{d^2}{dt^2}+{L_0^*}\right](u^f(t)-N^{\epsilon}
u^f(t))+ \delta \dot u^f(\tau)+\dot\delta u^f(\tau),
$$
where $u^f(t)-N^{\epsilon} u^f(t)\in {\rm Dom\,}
{L_0^*}\cap\overline{{\mathscr U}^{\epsilon}}$ holds. By the
latter, we have ${L_0^*}[u^f(t)-N^{\epsilon}
u^f(t)]={L_0^*}^{\epsilon}[u^f(t)-N^{\epsilon} u^f(t)]\in
\overline{{\mathscr U}^{\epsilon}}\subset \overline{{\mathscr
U}^\tau}$, where $L_0^{* {\epsilon}}$ is the space part of
${L_0^*}$ in $\overline{{\mathscr U}^{\epsilon}}$. In the mean
time, $u^f(\tau)$ and $\dot u^f(\tau)$ belong to
$\overline{{\mathscr U}^\tau}$. So, the source $F^{\epsilon}$ does
act from the subspace $\overline{{\mathscr U}^\tau}$, its time of
acting being equal to $T-\tau$.

By the latter, shifting time $t\to t-\tau$ in (\ref{Eq syst
complete 1})--(\ref{Eq syst complete 2}) and applying Theorem
\ref{T1}, we see that the source $F^{\epsilon}$ produces the wave
$v^{F^{\epsilon}}(T-\tau)=N^{\epsilon} u^f(T)$. When $f$ varies in
${\mathscr M}^T$, the projections $P^{\epsilon}_\bot N^{\epsilon}
u^f(T)=P^{\epsilon}_\bot u^f(T)$ of such waves constitute a
complete system in $\overline{{\mathscr
U}^T}\ominus\overline{{\mathscr U}^{\epsilon}}$. Tending
${\epsilon}\to 0$, by (\ref{Eq neutrol 1}) we conclude that there
is a sequence $\{F^{\epsilon}\}$ of the sources, which act from
$\overline{{\mathscr U}^\tau}$ and provide
$v^{F^{\epsilon}}(T-\tau)\to u^f(T)$. Therefore, these sources
produce a system of waves complete in $\overline{{\mathscr U}^T}$.

Since $\tau$ and $T$ are arbitrary, it is easy to see that what
has been proved is equivalent to the equality (\ref{Eq
completeness}). To justify the formal operations with $\delta$ and
$\dot\delta$, one needs to approximate them by a proper smooth
regularizations: see, e.g, \cite{BD_DSBC}.
\end{proof}

The idea to use a neutralizer comes from the Example, where its
existence is guaranteed and do not require additional assumptions.

\subsubsection*{One more abstract property}
Here is one more fact that takes place in the Example, which can
be generalized. At first glance, it looks very specific but, as
will be shown, does have an abstract analog.

Recall that the Friedrichs extension $L=-\Delta$ of the minimal
Laplacian is defined on ${\rm Dom\,} L=H^2(\Omega)\cap
H^1_0(\Omega)$. Let $y\in{\rm Dom\,} L$ and ${\rm
supp\,}y\subset\Omega\setminus\Omega^\tau$ for a positive
$\tau<T_*$, so that ${{\rm supp\,}\,}y$ is separated from the
boundary $\Gamma$ by the distance $\tau$. In such a case, we have
$y\big|_\Gamma=\partial_\nu y\big|_\Gamma=0$ and hence $y\in
H^2_0(\Omega)$ i.e., $y\in{\rm Dom\,} L_0$ holds.
\begin{Lemma}\label{L y=y0}
Let $\tau>0$ satisfy $\overline{{\mathscr
U}}\ominus\overline{{\mathscr U}^\tau}\not=\{0\}$ and $y\in
[\overline{{\mathscr U}}\ominus\overline{{\mathscr
U}^\tau}]\cap{\rm Dom\,} L$ hold. Then the relation $y\in{\rm
Dom\,} L_0$ is valid.
\end{Lemma}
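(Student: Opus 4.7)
The plan is to combine the already-established fact (\ref{Eq (Gamma2 v^y=0)}) with a direct time-differentiation of $\Gamma_2 v^y(t)$ at $t=0$ in order to read off $\Gamma_2 y=0$; together with the hypothesis $y\in\mathrm{Dom}\,L$ (which gives $\Gamma_1 y=0$), this puts $y$ in $\mathrm{Ker}\,\Gamma_1\cap\mathrm{Ker}\,\Gamma_2$, so $y\in\mathrm{Dom}\,L_0$ by (\ref{Eq L0,L}).

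Step 1. From $y\in\mathrm{Dom}\,L\subset\mathrm{Dom}\,L_0^*$ and $L=L_0^*\!\upharpoonright\mathrm{Ker}\,\Gamma_1$ I immediately get $\Gamma_1 y=0$ and $L_0^*y=Ly$, so $\Gamma_2 y=PL_0^*y=PLy$. Thus the entire question reduces to showing $PLy=0$.

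Step 2. By (\ref{Eq (Gamma2 v^y=0)}) applied with $\sigma=\tau$, the hypothesis $y\in\bigl[\overline{\mathscr U}\ominus\overline{\mathscr U^\tau}\bigr]\cap\mathrm{Dom}\,L$ yields $\Gamma_2 v^y(t)=0$ for $0\leqslant t\leqslant\tau$. Since $y\in\mathrm{Dom}\,L$ gives $L^{1/2}y\in\mathrm{Dom}\,L^{1/2}$, the representation (\ref{Eq v^delta y repres general}) shows $v^y(t)\in\mathrm{Dom}\,L$ for all $t\geqslant 0$ and
\[
Lv^y(t)=L^{1/2}\sin(tL^{1/2})y=\sin(tL^{1/2})L^{1/2}y.
\]
Consequently $\Gamma_2 v^y(t)=PL_0^*v^y(t)=PLv^y(t)=P\sin(tL^{1/2})L^{1/2}y=0$ on $[0,\tau]$.

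Step 3. Here is the main (and only really delicate) step: differentiate $PLv^y(t)$ at $t=0^+$. Because $L^{1/2}y\in\mathrm{Dom}\,L^{1/2}$, functional calculus gives that $t\mapsto\sin(tL^{1/2})L^{1/2}y$ is norm-$C^1$ on $[0,\infty)$ with derivative $\cos(tL^{1/2})Ly$, which is norm-continuous. Projecting with the bounded operator $P$ preserves this, so the scalar-valued (actually $\mathscr K$-valued) function $t\mapsto PLv^y(t)$ is $C^1$ on $[0,\infty)$ with right-derivative at $0$ equal to $P\cos(0)Ly=PLy$. Since the function vanishes identically on $[0,\tau]$, this right-derivative is zero, hence $PLy=0$.

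Step 4. Combine: by Step 1 the conclusion $PLy=0$ is the same as $\Gamma_2 y=0$; together with $\Gamma_1 y=0$ this gives $y\in\mathrm{Ker}\,\Gamma_1\cap\mathrm{Ker}\,\Gamma_2$, so $y\in\mathrm{Dom}\,L_0$ by the first identity in (\ref{Eq L0,L}). The main obstacle, if any, is just to make the spectral-calculus manipulation in Step~3 clean: one must be careful that $Lv^y(t)$ is genuinely norm-differentiable in $t$, and for this the hypothesis $y\in\mathrm{Dom}\,L$ (not merely $\mathrm{Dom}\,L^{1/2}$) is precisely what is needed, since it ensures $L^{1/2}y\in\mathrm{Dom}\,L^{1/2}$ and hence that $\cos(tL^{1/2})Ly$ is a continuous $\mathscr H$-valued function.
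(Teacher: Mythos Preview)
Your proof is correct and follows essentially the same route as the paper: both use $y\in\mathrm{Dom}\,L\Rightarrow\Gamma_1 y=0$, invoke (\ref{Eq (Gamma2 v^y=0)}) to get $\Gamma_2 v^y\equiv 0$ on $[0,\tau]$, and then differentiate at $t=0^+$ to extract $\Gamma_2 y=0$. The only cosmetic difference is that the paper phrases the differentiation via $\dot v^y(t)=\cos(tL^{1/2})y\in C([0,T];\mathrm{Dom}\,L)$ together with graph-norm continuity of $\Gamma_2$, whereas you compute $Lv^y(t)=\sin(tL^{1/2})L^{1/2}y$ directly and project with the bounded $P$; these are equivalent computations.
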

\begin{proof}
Recall that $y\overset{(\ref{Eq L0,L})}\in {\rm Ker\,}\Gamma_1$.
Let $T>\tau$. By $y\in{\rm Dom\,} L$ and (\ref{Eq (Gamma2
v^y=0)}), we have $\Gamma_2 v^y\big|_{0< t\leqslant\tau}=0$. Since
$y\in {\rm Dom\,} L$, one has
$$
\dot v^y(t)\overset{(\ref{Eq v^delta y repres
general})}=\cos[tL^{1\over 2}]\,y=L^{-1}\cos[tL^{1\over
2}]\,Ly,\qquad t\geqslant 0.
$$
This implies $\dot v^y\in C([0,T];{\rm Dom\,} L)$, where ${\rm
Dom\,} L$ is endowed with the $L$-graph norm \cite{BirSol}. By
corresponding continuity of $\Gamma_{1,2}$, we get $\Gamma_2 \dot
v^y\big|_{t=+0}=\Gamma_2y=0$. So, $y\in {\rm
Ker\,}\Gamma_1\cap{\rm Ker\,}\Gamma_2$, i.e., $y\overset{(\ref{Eq
L0,L})}\in{\rm Dom\,} L_0$ does hold.
\end{proof}

\subsubsection*{A bit of philosophy}
A character and goal of this paper may be commented on as follows.
In our opinion, working in specific branches of mathematical
physics  (like inverse problems), it is however reasonable to pay
attention to abstractions. Let us refer to the authority of
classicists. According to Van der Waerden, a maxima, which Emmy
Noether adhered to in her work, claims that {\it any
interconnection between numbers, functions and operations becomes
transparent, available for further generalization and productive
only after that, as it is separated from any specific objects and
is reduced to general terms}.

Systems $\alpha$ and $\beta$ are the general terms. We try to
follow the maxima.

\end{document}